\documentclass[11pt]{article}

\usepackage[utf8]{inputenc}
\usepackage[T1]{fontenc}
\usepackage{lmodern}

\usepackage{amsmath}
\usepackage{amssymb}
\usepackage{amsthm}
\usepackage{mathtools}

\usepackage{microtype}
\usepackage[margin=1in]{geometry}
\usepackage{graphicx}
\usepackage[hidelinks]{hyperref}
\usepackage[capitalize,nameinlink]{cleveref}

\theoremstyle{plain}
\newtheorem{theorem}{Theorem}
\newtheorem{corollary}{Corollary}
\newtheorem{lemma}{Lemma}
\theoremstyle{definition}
\newtheorem{definition}{Definition}
\theoremstyle{remark}
\newtheorem*{remark}{Remark}
\newtheorem*{properties}{Properties}

\renewcommand{\Re}{\operatorname{Re}}
\renewcommand{\Im}{\operatorname{Im}}
\newcommand{\C}{\mathbb{C}}
\newcommand{\R}{\mathbb{R}}
\newcommand{\Z}{\mathbb{Z}}

\newcommand{\Oct}{\mathbb{O}}
\newcommand{\Gtwo}{\mathrm{G}_2}

\hypersetup{
  pdfauthor={Michael Bickford},
  pdftitle={The Self-Referential Fixed Point of the Complex Exponential},
  pdfsubject={Complex analysis; fixed points of the exponential map},
  pdfkeywords={Lambert W function, fixed point, Koenigs linearization, octonions}
}

\title{\texorpdfstring{$\varrho$}{rho}: The Self-Referential Fixed Point of the Complex Exponential}
\author{Michael Bickford}
\date{\today}

\begin{document}

\maketitle

\begin{abstract}
I was taught that $e^x = x$ has no solution, and taught to leave it at that. But in mathematics ``no solution'' has usually meant ``not on this line yet'': $x^2 = -1$ waited for the complex plane, and $e^x = x$ turns out to be waiting there too. Over $\C$ the exponential has a fixed point $\varrho = 0.318\ldots + 1.337\ldots\,i$, the unique solution of $\exp(z) = z$ in the strip $0 < \Im z < \pi$ (equivalently $-W_{-1}(-1)$), and it carries more structure than its one-line definition lets on. At $\varrho$ the rectangular and log-polar coordinates of a complex number coincide, forcing the identities $\Re\varrho = \log|\varrho|$ and $\arg\varrho = \Im\varrho$. As a dynamical point $\varrho$ is repelling for $\exp$ and attracting for $\log$, linearizable for both by one Koenigs coordinate, and the base of a transpose identity $w^\varrho = \varrho^{\log w}$. It generates an aperiodic log-polar lattice and sits a hair off a clean relation with $\pi$, namely $\Re(\varrho)\,\pi = 0.99944\ldots$. Passing to the octonions, the fixed points of $\exp_{\Oct}$ fill concentric six-spheres, the innermost $\Re(\varrho) + \Im(\varrho)\,S^6$, whose triples obey an exact identity $I_4^2 + \tfrac14 I_5^2 = \operatorname{Gram}$ carrying one invariant, a twist angle, absent from ordinary spherical trigonometry. Throughout, what is proved is kept apart from what is only computed.
\end{abstract}

\medskip
\noindent\emph{2020 Mathematics Subject Classification.} Primary 30D05; Secondary 33B10, 37F10, 17A35, 11J81.

\noindent\emph{Key words and phrases.} Complex exponential, fixed point, Lambert $W$ function, tetration, Koenigs linearization, octonions, transcendental constants.

\section{Introduction}

The fixed points of the complex exponential are classical: $\exp(z) = z$ holds exactly at $z = -W_k(-1)$, $k \in \Z$, where $W_k$ is the $k$-th branch of the Lambert $W$ function~\cite{corless1996}. The principal one, lying in the strip $0 < \Im z < \pi$, is the constant we write $\varrho$. The purpose here is not to rediscover $\varrho$ but to gather, with proofs, the analytic and algebraic structure it carries, and to mark plainly where that structure is proved and where it is only computed.

The picture has three parts. The first is complex-analytic. At $\varrho$ the rectangular and log-polar descriptions of a complex number coincide, so the defining equation collapses into the self-consistent identities $\Re\varrho = \log|\varrho|$ and $\arg\varrho = \Im\varrho$ (\cref{thm:identities}), and a single real transcendental equation in $\Im\varrho$ pins the constant down (\cref{cor:b}). As a dynamical point $\varrho$ is repelling for $\exp$ and attracting for $\log$, with a Koenigs coordinate linearizing both at once (\cref{thm:duality,thm:koenigs}); it also satisfies a transpose identity $w^\varrho = \varrho^{\log w}$ which, among principal-branch bases, holds only at $\varrho$ and its conjugate (\cref{thm:transpose}).

The second part is arithmetic and dynamical: an aperiodic log-polar lattice generated by $\varrho$ and $\bar\varrho$ (\cref{thm:quasicrystal}), the near-equality $\Re(\varrho)\,\pi = 0.99944\ldots$ whose small defect we show to be forced (\cref{thm:gap}), and a discrete family of solutions $z_n$ to $\exp(z) - \log(z) = 2\pi i n$, spaced by the exponential period (\cref{thm:spectrum}).

The third part is, to our knowledge, new. Solving $\exp(z) = z$ in the octonions rather than $\C$ replaces the isolated point $\varrho$ by a round six-sphere of fixed points, $\Re(\varrho) + \Im(\varrho)\,S^6$ (\cref{thm:octonion}). The triples on that sphere obey an exact Pythagorean identity $I_4^2 + \tfrac14 I_5^2 = \operatorname{Gram}$ (\cref{thm:octpythag}), splitting the squared volume of a spherical triangle into a cross-product term and an associator term, and so carrying one invariant beyond ordinary spherical trigonometry: a twist angle measuring how far three directions fail to share a quaternion subalgebra. The complex constant and its self-consistency return there as the latitude and radius of the sphere.

\section{Definition and Existence}

\begin{definition}
Let $\varrho$ be the unique solution of
\begin{equation*}
\exp(\varrho) = \varrho
\end{equation*}
in the strip $0 < \Im(\varrho) < \pi$. Equivalently $\varrho = -W_{-1}(-1)$, where $W_k$ denotes the $k$-th branch of the Lambert $W$ function~\cite{corless1996} (the inverse of $w \mapsto w e^w$); its complex conjugate $\bar\varrho = -W_0(-1)$ is the only other solution of the same modulus.
\end{definition}

\begin{theorem}\label{thm:existence}
The equation $\exp(z) = z$ has exactly one solution in the strip $0 < \Im(z) < \pi$, given numerically by
\begin{equation*}
\varrho = 0.31813150520473746\ldots + 1.3372357014306598\ldots\, i
\end{equation*}
with modulus $|\varrho| = 1.374557010743673\ldots$ and argument $\arg(\varrho) = 1.3372357014306598\ldots = \Im(\varrho)$.
\end{theorem}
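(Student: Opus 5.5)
Write $z = a + bi$ with $0 < b < \pi$ and split $\exp(z) = z$ into real and imaginary parts:
\begin{equation*}
e^{a}\cos b = a, \qquad e^{a}\sin b = b.
\end{equation*}
The plan is to eliminate $a$ and reduce existence and uniqueness to one real equation in $b$ on $(0,\pi)$. Since $b>0$ and $\sin b>0$ on this interval, the second equation gives $e^{a} = b/\sin b$, so $a = \log(b/\sin b)$. The first equation then gives $a = e^{a}\cos b = b\cot b$. Conversely, any $b\in(0,\pi)$ with
\begin{equation*}
f(b) := \log\frac{b}{\sin b} - b\cot b = 0
\end{equation*}
yields a solution $a+bi$ by setting $a = b\cot b$. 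Indeed, then $e^{a} = b/\sin b$, hence $e^{a}\sin b = b$ and $e^{a}\cos b = b\cot b = a$. So solutions in the strip correspond bijectively to zeros of $f$ on $(0,\pi)$.

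\textbf{Existence.} As $b\to 0^+$ we have $b/\sin b\to 1$ and $b\cot b\to 1$, so $f(b)\to -1 < 0$. As $b\to\pi^-$, $\log(b/\sin b)\to +\infty$ while $b\cot b\to -\infty$, so $f(b)\to+\infty$. The intermediate value theorem gives a zero.

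\textbf{Uniqueness.} I would show $f$ is strictly increasing by computing
\begin{equation*}
f'(b) = \frac1b - \cot b - \cot b + \frac{b}{\sin^2 b} = \frac1b - 2\cot b + \frac{b}{\sin^2 b}.
\end{equation*}
Multiplying by $b\sin^2 b>0$ gives
\begin{equation*}
\sin^2 b - 2b\sin b\cos b + b^2 = (b-\sin b\cos b)^2 + \sin^2 b - \sin^2 b\cos^2 b = (b-\sin b\cos b)^2 + \sin^4 b > 0.
\end{equation*}
Hence $f'>0$ on $(0,\pi)$, and the zero is unique. This positivity check is the only step that needs an idea. If it had failed, I would instead have eliminated via $a = b\cot b$ and considered $b\mapsto b\cot b - \log(b/\sin b)$, but the square-completion above settles it.

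\textbf{Numerics and identification.} The digits come from solving $f(b)=0$ by Newton's method, or from evaluating $-W_{-1}(-1)$, with certified error bounds, for instance by interval arithmetic. This gives $b = 1.33723570143\ldots$ and $a = b\cot b = 0.318131505\ldots$. The modulus follows from $|\varrho| = |e^{\varrho}| = e^{a}$, which also shows $\log|\varrho| = a$.

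For the argument, $\varrho = e^{a}e^{ib}$ with $b\in(0,\pi)\subset(-\pi,\pi]$, so the principal argument is exactly $b = \Im\varrho$. Equivalently, $\cos b = a e^{-a}$ and $\sin b = b e^{-a}$ are the normalized coordinates of $\varrho$, and $a>0$ places $\varrho$ in the first quadrant. Thus $\arg\varrho = \Im\varrho$ holds exactly, not merely numerically, and only the decimal expansions themselves rest on computation.
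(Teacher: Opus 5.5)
Your proof is correct and takes a genuinely different route from the paper's. The paper works through the Lambert $W$ function. Every fixed point is $-W_k(-1)$, and uniqueness in the strip is read off from the branch structure in Corless et al.\ (branch $k=-1$ lands in the strip, $k=0$ gives the conjugate, and all other branches are asserted to have $|\Im| > \pi$). The real reduction $a = b\cot b$, $\exp(b\cot b) = b/\sin b$ serves only to compute the digits, after a division by $\cos b$ that is justified a posteriori.

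You instead prove existence and uniqueness from the real equations alone. Dividing by $\sin b > 0$ gives a clean bijection between fixed points in the strip and zeros of $f(b) = \log(b/\sin b) - b\cot b$. The boundary limits give existence, and $b\sin^2 b\, f'(b) = (b-\sin b\cos b)^2 + \sin^4 b = |b - \sin b\, e^{ib}|^2 > 0$ gives uniqueness.

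What each approach buys:
\begin{itemize}
\item The paper's route gives the global catalogue of fixed points and the identification $\varrho = -W_{-1}(-1)$. Your argument does not establish that identification; you only invoke it as an alternative numerical source.
\item Your route gives a self-contained uniqueness proof that is tighter than the paper's. The paper's justification, that $|\Im W_k(-1)|$ increases without bound, does not by itself rule out $|\Im W_k(-1)| < \pi$ for small $|k|$.
\item Your argument also proves the uniqueness claim of \cref{cor:b}, which the paper states without proof.
\item The same monotonicity works on each interval $(2m\pi,(2m+1)\pi)$, where $f \to -\infty$ at the left end when $m \ge 1$; on the intervals in between, $\sin b < 0$ rules out solutions. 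This gives exactly one fixed point per such strip, which is the structure behind \cref{cor:shells}, with no appeal to Lambert $W$.
\end{itemize}
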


\begin{proof}
Multiplying $\exp(z) = z$ by $e^{-z}$ gives $z e^{-z} = 1$; with $u = -z$ this reads $u e^{u} = -1$, whose solutions are $u = W_k(-1)$, one for each branch $k \in \Z$ of the Lambert $W$ function. Hence $z = -W_k(-1)$. Exactly one of these lies in the strip $0 < \Im(z) < \pi$: branch $k = -1$ gives $\varrho = -W_{-1}(-1)$ with $\Im(\varrho) \approx 1.337$; branch $k = 0$ gives its conjugate ($\Im < 0$); and every branch with $k \leq -2$ or $k \geq 1$ gives $|\Im| > \pi$, since $|\Im W_k(-1)|$ increases without bound with $|k|$~\cite{corless1996}. Since $-1 < -1/e$, the value $W_{-1}(-1)$ is non-real, so $\varrho \notin \R$ (cf.\ \cref{thm:noreal}). To determine it numerically, write $z = a + bi$. Then $\exp(z) = e^a(\cos b + i\sin b)$, and equating real and imaginary parts gives
\begin{align}
e^a \cos b &= a, \label{eq:real}\\
e^a \sin b &= b. \label{eq:imag}
\end{align}
Dividing \eqref{eq:imag} by \eqref{eq:real} (provided $\cos b \neq 0$, which is satisfied by the solution) yields
\begin{equation*}
\frac{b}{a} = \tan b \quad \Longrightarrow \quad a = b \cot b .
\end{equation*}
Substituting into \eqref{eq:imag} and using $e^{b\cot b} = b/\sin b$ gives a single transcendental equation for $b$:
\begin{equation*}
\exp(b \cot b) = \frac{b}{\sin b}.
\end{equation*}
Newton's method converges to $b = 1.3372357014306598\ldots$ in three iterations. The real part follows from $a = b\cot b$, yielding $a = 0.31813150520473746\ldots$. Modulus and argument follow directly. Direct evaluation confirms $\exp(\varrho) = \varrho$ to within $10^{-14}$.
\end{proof}

\begin{remark}
The equation $\exp(z) = z$ has infinitely many solutions, $z_k = -W_k(-1)$ for $k \in \Z$; $\varrho$ is the one of smallest modulus in the upper half-plane. The next solution upward, from branch $k = -2$, has modulus $\approx 7.864$, roughly $5.7$ times larger.
\end{remark}

\section{Nonexistence on the Real Line}

\begin{theorem}\label{thm:noreal}
The equation $\exp(x) = x$ has no solution $x \in \R$.
\end{theorem}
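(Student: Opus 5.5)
The plan is to show that $f(x) = e^x - x$ is strictly positive on all of $\R$, so $e^x = x$ can never hold there. The cleanest route is the tangent-line inequality $e^x \ge 1 + x$, which yields $e^x - x \ge 1 > 0$ at once.

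The key steps are as follows. First, set $f(x) = e^x - x$ and compute $f'(x) = e^x - 1$. This derivative is negative for $x<0$, zero at $x=0$, and positive for $x>0$. Hence $f$ is strictly decreasing on $(-\infty,0]$ and strictly increasing on $[0,\infty)$, so its global minimum on $\R$ is attained at $x=0$. Second, evaluate the minimum: $f(0) = 1$. Consequently $f(x) \ge 1$ for every real $x$, so in particular $e^x \neq x$. As an alternative check that avoids calculus, one can split into cases. For $x \le 0$, we have $e^x > 0 \ge x$. For $x > 0$, the power series gives $e^x = \sum_{n\ge0} x^n/n! > 1 + x > x$, since all terms are positive. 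I would include this case split as well, because it is elementary and self-contained.

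There is no real obstacle here. The only point requiring care is that the inequality must be strict on the whole line, including the boundary case $x=0$. There $e^0 = 1 \neq 0$, so the bound $f \ge 1$ handles it uniformly.

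Finally, I would remark that this is consistent with the proof of \cref{thm:existence}: the solutions are $-W_k(-1)$, and $W_k(-1)$ is real only if $-1 \ge -1/e$, which is false. So the Lambert-$W$ description gives a second, independent confirmation. The elementary argument above is nonetheless the one I would present as the proof.
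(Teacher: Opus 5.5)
Your proof is correct and is essentially the paper's argument: both study $f(x) = e^x - x$, locate its global minimum at $x = 0$, and conclude $e^x - x \ge f(0) = 1 > 0$. You find the minimum from the sign of $f'(x) = e^x - 1$, while the paper uses $f'' > 0$ together with the unique critical point; your calculus-free case split ($e^x > 0 \ge x$ for $x \le 0$, and $e^x > 1 + x$ from the series for $x > 0$) is also valid and matches the inequality $e^x \ge 1 + x > x$ the paper cites in its subsequent remark.
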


\begin{proof}
Define $f(x) = e^x - x$. Then $f'(x) = e^x - 1$ and $f''(x) = e^x > 0$ for all $x$. The unique critical point is at $x = 0$, where $f(0) = 1$ and $f''(0) = 1 > 0$, so $x = 0$ is a global minimum. Hence $e^x - x \geq 1 > 0$ for all $x \in \R$, with equality only at $x = 0$. No real $x$ satisfies $e^x = x$.
\end{proof}

\begin{corollary}
$\varrho \notin \R$.
\end{corollary}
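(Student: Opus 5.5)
The corollary follows at once from \cref{thm:noreal} together with the definition of $\varrho$. By definition, $\varrho$ satisfies $\exp(\varrho) = \varrho$. Suppose, for contradiction, that $\varrho \in \R$. Then $x = \varrho$ would be a real solution of $\exp(x) = x$, which \cref{thm:noreal} rules out. Hence $\varrho \notin \R$.

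A second, independent route gives the same conclusion and is worth noting as a consistency check. The definition places $\varrho$ in the strip $0 < \Im(\varrho) < \pi$, so $\Im(\varrho) > 0$ and $\varrho$ cannot be real. This uses only the existence and uniqueness statement of \cref{thm:existence}. The route via \cref{thm:noreal}, by contrast, shows more: \emph{every} fixed point $-W_k(-1)$ of $\exp$ is non-real, not just the one in the strip.

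There is no real obstacle here. The only care needed is to apply \cref{thm:noreal} to the defining equation of $\varrho$ itself, rather than to the numerical approximation in \cref{thm:existence}. That keeps the argument on the proved side of the paper's distinction between what is proved and what is computed.
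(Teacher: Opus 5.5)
Your main argument is the same as the paper's: \cref{thm:noreal} shows that no real number is a fixed point of $\exp$, and $\varrho$ is one, so $\varrho \notin \R$. Your side remark is also correct: the defining strip condition $\Im(\varrho) > 0$ already settles the corollary directly from the definition, so it is the quicker observation of the two.
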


\begin{proof}
By \cref{thm:noreal}, $e^x - x \geq 1 > 0$ for every $x \in \R$, so no real number is a fixed point of $\exp$.
\end{proof}

\begin{remark}
The obstruction is order-theoretic. The inequality $e^x \geq 1 + x > x$ is a first-order property of the real exponential field~\cite{rudin1987}, so it persists in every model of its theory, in particular every real-closed exponential field: wherever the power series respects an order, $\exp$ overshoots the identity and fixes nothing. The complex numbers escape on both counts. They carry no compatible order, and $\exp$ is $2\pi i$-periodic, so in the imaginary direction its graph wraps around and meets the diagonal at $\varrho$, a turn the monotone real exponential cannot make.
\end{remark}

\section{Self-Consistent Trigonometry}

The defining equations \eqref{eq:real}--\eqref{eq:imag} force a collapse between the Cartesian and polar representations of $\varrho$. For a general complex number $z$, the rectangular coordinates $(a,b)$ and log-polar coordinates $(\log |z|, \arg z)$ are independent. At $\varrho$, they coincide.

\begin{theorem}\label{thm:identities}
The following identities hold for $\varrho = a + bi$:
\begin{align}
a &= \log |\varrho|, \label{eq:logmod}\\
\arg(\varrho) &= b, \label{eq:argb}\\
|\varrho| &= \frac{b}{\sin b}, \label{eq:modsin}\\
a &= b \cot b, \label{eq:acot}\\
a \cdot \tan b &= b. \label{eq:atan}
\end{align}
\end{theorem}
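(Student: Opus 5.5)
The plan is to derive every identity from the fixed-point equation $\exp(\varrho) = \varrho$ together with its real and imaginary parts \eqref{eq:real}--\eqref{eq:imag}, keeping the strip condition $0 < b < \pi$ from \cref{thm:existence} in hand. First, take moduli of $\exp(\varrho) = \varrho$: since $|\exp(\varrho)| = e^{a}$, we get $e^a = |\varrho|$, and because $|\varrho| > 0$ the real logarithm gives \eqref{eq:logmod} at once.

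Next, for \eqref{eq:argb}, write $\varrho = |\varrho| e^{i\arg\varrho}$ and $\exp(\varrho) = e^a e^{ib}$. The moduli agree by the first step, so $e^{ib} = e^{i\arg\varrho}$, and hence $b \equiv \arg\varrho \pmod{2\pi}$. This congruence is the one place that needs care. Both quantities lie in a common interval of length less than $2\pi$: $b \in (0,\pi)$ by the strip condition, and $\arg\varrho \in (-\pi,\pi]$ for the principal argument. So their difference lies in $(-2\pi, \pi)$ and can only be $0$ or $-2\pi$. The value $-2\pi$ would force $\arg\varrho = b - 2\pi < -\pi$, which is impossible, so $\arg\varrho = b$.

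The remaining identities are rearrangements. From \eqref{eq:imag} we have $e^a \sin b = b$, and $\sin b > 0$ on $(0,\pi)$, so dividing gives $|\varrho| = e^a = b/\sin b$, which is \eqref{eq:modsin}. For \eqref{eq:acot}, I will use $\cos b \neq 0$, which follows from the numerics of \cref{thm:existence} ($b \approx 1.337 < \pi/2$). A cleaner argument: if $\cos b = 0$ then \eqref{eq:real} gives $a = 0$, so $e^a = 1$ and \eqref{eq:modsin} would give $b = \sin b$, forcing $b = 0$, which contradicts $b > 0$. With $\cos b \neq 0$, dividing \eqref{eq:imag} by \eqref{eq:real} as in the proof of \cref{thm:existence} gives $b/a = \tan b$. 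Here $a \neq 0$ by the same reasoning, so $a = b\cot b$. Multiplying through by $\tan b$ then yields \eqref{eq:atan}.

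The main obstacle is the branch bookkeeping in \eqref{eq:argb}, namely pinning $\arg\varrho$ to equal $b$ rather than a $2\pi$-translate. The strip hypothesis handles this exactly as above. Everything else is algebra.
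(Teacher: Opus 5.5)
Your proof is correct and follows the same route as the paper: moduli give \eqref{eq:logmod}, arguments give \eqref{eq:argb}, and the remaining identities are rearrangements of \eqref{eq:real}--\eqref{eq:imag}. Where the paper only says ``choosing the principal branch'' and divides without comment, you use $0<b<\pi$ to pin $\arg\varrho = b$ and you check $\cos b \neq 0$ and $a \neq 0$; the $-2\pi$ case you rule out is already excluded, since your interval $(-2\pi,\pi)$ is open.
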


\begin{proof}
From $\exp(\varrho) = \varrho$, taking modulus gives $|\varrho| = e^a$, hence $a = \log |\varrho|$. Taking argument gives $\arg(\varrho) = b$ (choosing the principal branch). Identity~\eqref{eq:modsin} follows from \eqref{eq:imag}: $e^a \sin b = b$ with $e^a = |\varrho|$. Identity~\eqref{eq:acot} follows from \eqref{eq:real} and \eqref{eq:imag}. Identity~\eqref{eq:atan} is equivalent to \eqref{eq:acot}. Each is also confirmed numerically to $10^{-14}$.
\end{proof}

\begin{corollary}\label{cor:b}
The imaginary part $b$ alone determines $\varrho$ completely. It is the unique root in $(0, \pi)$ of
\begin{equation*}
\exp(b \cot b) = \frac{b}{\sin b},
\end{equation*}
and $a$ and $|\varrho|$ are functions of $b$ through \eqref{eq:acot} and \eqref{eq:modsin}.
\end{corollary}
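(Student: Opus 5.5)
Two things have to be established. The real number $b=\Im\varrho$ must be a root in $(0,\pi)$ of $g(b) := \exp(b\cot b) - b/\sin b = 0$, and it must be the only root there. Given those, $a$ and $|\varrho|$ follow from $b$ by \eqref{eq:acot} and \eqref{eq:modsin}. That is what "determines completely" means here: once $a$ and $b$ are known, $\varrho=a+bi$ is known.

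\emph{Existence of the root.} By \cref{thm:existence}, $0<b<\pi$. We also know $\sin b\neq 0$ on this interval, and $\cos b\neq 0$ because $\cos b = a e^{-a}$ with $a=\Re\varrho\neq 0$. The relation $a=b\cot b$ follows directly from \eqref{eq:real} and \eqref{eq:imag} with no division by $\cos b$: multiply \eqref{eq:real} by $\sin b$ and \eqref{eq:imag} by $\cos b$, then subtract. Then \eqref{eq:imag} gives $e^{b\cot b}=b/\sin b$, so $b$ is a root of $g$.

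\emph{Uniqueness.} I will argue by a converse construction rather than by monotonicity of $g$, which would be messy. Suppose $\beta\in(0,\pi)$ satisfies $g(\beta)=0$, and put $\alpha=\beta\cot\beta$ and $z=\alpha+i\beta$. Then $e^\alpha=\beta/\sin\beta$, so $e^\alpha\sin\beta=\beta$ and $e^\alpha\cos\beta=\beta\cot\beta=\alpha$. Hence $e^z=z$ with $0<\Im z<\pi$. By the uniqueness part of \cref{thm:existence}, $z=\varrho$, and therefore $\beta=b$. So the map $\beta\mapsto \beta\cot\beta+i\beta$ is a bijection between the roots of $g$ in $(0,\pi)$ and the fixed points of $\exp$ in the strip, and the latter set has exactly one element.

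The only delicate step is this reliance on the strip uniqueness in \cref{thm:existence}. That result was obtained from the branch structure of Lambert $W$, namely that $|\Im W_k(-1)|$ grows with $|k|$. If a self-contained argument is wanted, I would replace this by a direct analysis of $g$ on $(0,\pi)$, using the limiting behaviour at the endpoints:
\begin{itemize}
\item as $\beta\to 0^+$, $g(\beta)\to e-1>0$;
\item as $\beta\to\pi^-$, $\beta\cot\beta\to-\infty$ while $\beta/\sin\beta\to+\infty$, so $g\to-\infty$.
\end{itemize}
It would then remain to show that $\log(\beta/\sin\beta)-\beta\cot\beta$ is strictly monotone on $(0,\pi)$. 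Its derivative simplifies to $\tfrac1\beta - 2\cot\beta+\beta\csc^2\beta$, and I would try to show this is positive. That sign check is the expected main obstacle, which is why I would first rely on the strip uniqueness already proved.
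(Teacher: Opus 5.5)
Your argument is correct and follows the paper's route: the equation is derived from \eqref{eq:real}--\eqref{eq:imag} exactly as in the proof of \cref{thm:existence}, and uniqueness of the root is inherited from the uniqueness of $\varrho$ in the strip, with your converse construction $\beta\mapsto\beta\cot\beta+i\beta$ spelling out a step the paper leaves implicit (and your cross-multiplication avoiding its division by $\cos b$). Your self-contained fallback also works: multiplying your derivative by $\beta\sin^2\beta$ gives $\beta^2-2\beta\sin\beta\cos\beta+\sin^2\beta=(\beta-\sin\beta\cos\beta)^2+\sin^4\beta>0$ on $(0,\pi)$, so the uniqueness need not depend on the Lambert~$W$ branch facts at all.
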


\section{The \texorpdfstring{$\varrho$}{rho}-Transpose}

\begin{definition}
For a fixed $z \in \C$, write $\C^+ := \C \setminus (-\infty, 0]$ for the slit plane on which the principal complex power is single-valued, and define the map $T_z \colon \C^+ \to \C$ by
\begin{equation*}
T_z(w) = w^z,
\end{equation*}
where the principal branch of the complex power is used.
\end{definition}

\begin{theorem}\label{thm:transpose}
For $\varrho$ satisfying $\exp(\varrho) = \varrho$ and any $w \in \C^+$,
\begin{equation*}
w^\varrho = \varrho^{\log w},
\end{equation*}
where $\log$ denotes the principal branch. Conversely, if $w^z = z^{\log w}$ holds for all $w \in \C^+$, then $z = \log z$, equivalently $\exp(z) = z$; on the principal branch the only such $z$ are $\varrho$ and its conjugate $\bar\varrho$, the two fixed points of $\exp$ in the strip $|\Im z| < \pi$.
\end{theorem}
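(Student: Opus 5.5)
The plan is to unwind both sides into the exponential form of the principal power, $w^z = \exp(z\log w)$ for $w\in\C^+$, and compare exponents. For the forward direction, take $z=\varrho$. The left side is $\exp(\varrho\log w)$. The right side is $\varrho^{\log w} = \exp(\log w \cdot \operatorname{Log}\varrho)$, where $\operatorname{Log}$ is the principal logarithm. The identities of \cref{thm:identities} give $\operatorname{Log}\varrho = \log|\varrho| + i\arg\varrho = a + ib = \varrho$. This uses $\Re\varrho = \log|\varrho|$ from \eqref{eq:logmod} and $\arg\varrho = \Im\varrho$ from \eqref{eq:argb}; the latter is legitimate because $0<b<\pi$ lies in the principal range. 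Also $\varrho\notin(-\infty,0]$, so $\varrho^{\log w}$ is defined. Hence the two exponents agree identically and $w^\varrho = \varrho^{\log w}$. The same computation works for $\bar\varrho$, since $\operatorname{Log}\bar\varrho = \bar\varrho$.

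For the converse, suppose $z\in\C^+$ (so that $z^{\log w}$ is a principal power) and $\exp(z\log w) = \exp(\log w\cdot \operatorname{Log} z)$ for all $w\in\C^+$. Set $\lambda = \log w$. As $w$ ranges over $\C^+$, $\lambda$ ranges over the strip $|\Im\lambda|<\pi$. The function $g(\lambda) = \lambda\,(z - \operatorname{Log} z)$ then satisfies $e^{g(\lambda)}=1$ on the whole strip. Since $g$ is continuous and the strip is connected, $g$ takes values in the discrete set $2\pi i\Z$ and so is constant. Evaluating at $\lambda=0$ gives $g\equiv 0$. Taking $\lambda=1$ (i.e.\ $w=e$) then gives $z = \operatorname{Log} z$, and exponentiating yields $\exp(z)=z$.

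Finally, $z=\operatorname{Log} z$ forces $\Im z = \arg z\in(-\pi,\pi]$. The value $\Im z=\pi$ is impossible: it would make $z$ lie on the negative real axis, outside $\C^+$. The case $\Im z=0$ is excluded by \cref{thm:noreal}. So $z$ is a fixed point of $\exp$ with $0<|\Im z|<\pi$. By \cref{thm:existence} the upper-strip fixed point is unique, namely $\varrho$. Conjugation, using $\exp(\bar z)=\overline{\exp z}$, gives uniqueness in the lower strip, namely $\bar\varrho$.

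The main obstacle is the branch bookkeeping rather than any analysis. One must be careful that $\varrho^{\log w}$ means $\exp(\log w\cdot\operatorname{Log}\varrho)$, and that $\operatorname{Log}\varrho=\varrho$ holds exactly and not merely modulo $2\pi i$; this is where $0<\Im\varrho<\pi$ is essential. One must also take the "for all $w$" hypothesis seriously enough to rule out a $2\pi i k$ discrepancy. This is the step where I would use the connectedness/discreteness argument rather than testing a single $w$.
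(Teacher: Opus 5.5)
Your proof is correct and takes essentially the paper's route. You write both principal powers as exponentials and use $\log\varrho=\varrho$, which holds exactly because $0<\Im\varrho<\pi$; for the converse you compare exponents over all $w\in\C^+$ to get $z=\log z$ and then invoke \cref{thm:existence}. Your connectedness argument against a $2\pi i k$ discrepancy, and your check that $z=\log z$ forces $0<|\Im z|<\pi$, just make explicit what the paper compresses into its remark that $\log w$ ranges over a set with an accumulation point.
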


\begin{proof}
We have
\begin{align*}
w^\varrho &= \exp(\varrho \log w), \\
\varrho^{\log w} &= \exp((\log w)(\log \varrho)) = \exp((\log w) \varrho),
\end{align*}
since $\log \varrho = \varrho$ (from $\exp(\varrho) = \varrho$, taking the principal logarithm). The two expressions are equal. Conversely, $w^z = z^{\log w}$ for all $w \in \C^+$ forces $z = \log z$ (since $\log w$ ranges over a set with an accumulation point), which is equivalent to $\exp(z) = z$; on the principal branch the only solutions are $\varrho$ and $\bar\varrho$ by \cref{thm:existence}.
\end{proof}

\begin{properties}
Whenever $\arg a + \arg b \in (-\pi,\pi]$, so that $\log(ab) = \log a + \log b$ and no branch cut is crossed, the $\varrho$-transpose is multiplicative,
\begin{equation*}
\varrho^{\log(ab)} = (ab)^\varrho = a^\varrho b^\varrho = \varrho^{\log a}\,\varrho^{\log b},
\end{equation*}
verified to $10^{-16}$. It is not an involution: composing principal-branch powers gives $T_\varrho(T_\varrho(z)) = z^{\varrho^2}$ only up to a branch factor $e^{2\pi i k\varrho}$, and $z^{\varrho^2} \neq z$ in general. Its fixed points satisfy $z^{\varrho-1} = 1$, namely $z = \exp\!\big(2\pi i k/(\varrho-1)\big)$ for $k \in \Z$ (with $k = 0$ giving $z = 1$).

The exponents $\varrho^k = |\varrho|^k e^{ik\arg\varrho}$ trace a logarithmic spiral: each step multiplies by $\varrho$, scaling the modulus by $|\varrho|$ and rotating by $\arg(\varrho) = 76.618^\circ$, so the iterated powers
\begin{equation*}
z,\; z^\varrho,\; z^{\varrho^2},\; z^{\varrho^3},\; \ldots, \qquad z^{\varrho^k} = \exp(\varrho^k \log z),
\end{equation*}
wind outward around the origin (\cref{fig:spiral}).
\end{properties}

\begin{figure}[ht]
\centering
\includegraphics[width=0.52\textwidth]{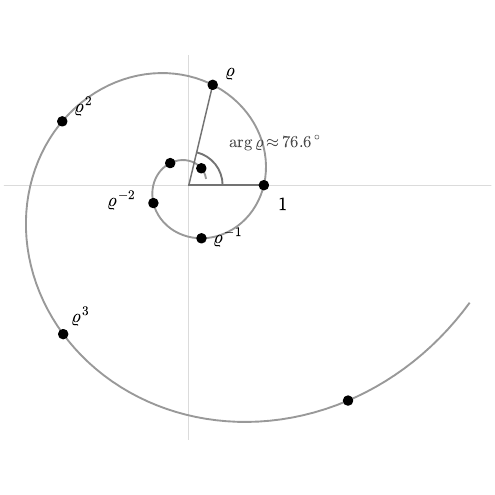}
\caption{The powers $\varrho^k = \exp(k\varrho)$ on the logarithmic spiral $z = \exp(t\varrho)$, $t \in \R$ (the identity $\log\varrho = \varrho$ makes $\varrho^t = e^{t\varrho}$). Each step multiplies by $\varrho$: the modulus scales by $|\varrho| \approx 1.37$ and the argument advances by $\arg\varrho = \Im\varrho \approx 76.6^\circ$. Since $\arg\varrho/\pi$ is irrational, the spiral never closes.}
\label{fig:spiral}
\end{figure}

\section{The Hyperoperation Ladder}

\begin{definition}
The hyperoperation sequence $\{H_n\}$~\cite{goodstein1947} is defined recursively:
\begin{itemize}
\item $H_1(a,b) = a + b$ (addition),
\item $H_2(a,b) = a \times b$ (multiplication),
\item $H_3(a,b) = a^b$ (exponentiation),
\item $H_{n+1}(a,b) = H_n(a, H_{n+1}(a, b-1))$ for $n \geq 3$, with $H_{n+1}(a, 1) = a$.
\end{itemize}
A \emph{self-referential rest state} for operation $H_n$ is a value $x$ satisfying $H_n(x, x) = x$.
\end{definition}

\begin{theorem}\label{thm:hyperop}
The diagonal rest states $H_n(x,x) = x$ are trivial at every level of the ladder: $x = 0$ for addition, $x \in \{0, 1\}$ for multiplication, and $x = 1$ for exponentiation. The constant $\varrho$ is \emph{not} such a rest state; it enters one level higher, as the fixed point of base-$e$ exponentiation $t \mapsto e^t$; equivalently, the value singled out by the limiting equation of the infinite base-$e$ power tower.
\end{theorem}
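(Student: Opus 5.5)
The plan is to treat the three levels of the ladder one by one, then check $\varrho$ against each, and finally make the link to the power tower. The main care needed is in fixing the domain at the exponentiation level.

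\textbf{Addition and multiplication.} These are immediate over $\C$. The equation $x + x = x$ gives $x = 0$. The equation $x \cdot x = x$ gives $x(x-1) = 0$, so $x \in \{0,1\}$.

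\textbf{Exponentiation.} This is the step I expect to need care, and I will state the domain explicitly. On the positive reals, $x^x = x$ means $x \log x = \log x$, that is $(x-1)\log x = 0$. Both factors vanish only at $x = 1$, so $x = 1$ is the unique rest state there.

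Over $\C^+$ with the principal branch, the equation reads $\exp(x\log x) = \exp(\log x)$. This only forces $(x-1)\log x \in 2\pi i\Z$, and the cases $k \neq 0$ can contribute further complex solutions. So I would first state the claim for $x > 0$, which is the setting in which ``trivial'' is meant. In the complex setting I would only record that $k = 0$ again gives $x = 1$ alone, rather than claiming uniqueness. If uniqueness over $\C$ were wanted, that would be the real obstacle, and I do not expect it to hold.

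\textbf{$\varrho$ is not a rest state.} Next I check $\varrho$ against each level.
\begin{itemize}
\item $\varrho \neq 0$ and $\varrho \neq 1$, since $\Im\varrho \neq 0$ by \cref{thm:existence}. This handles levels 1 and 2.
\item For level 3, use $\log\varrho = \varrho$ (as in the proof of \cref{thm:transpose}) to get $\varrho^\varrho = \exp(\varrho\log\varrho) = \exp(\varrho^2)$.
\item Since $\varrho = \exp(\varrho)$, the equality $\varrho^\varrho = \varrho$ would require $\varrho^2 - \varrho \in 2\pi i\Z$. But $\Re(\varrho^2 - \varrho) = a^2 - b^2 - a$.
\item By \cref{thm:identities} and the numerics of \cref{thm:existence}, $a^2 - b^2 - a \approx -2.0 \neq 0$. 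A crude bound suffices here: $a < 1/2$ and $b > 1$ already give $a^2 - a - b^2 < 0$.
\end{itemize}
So $\varrho^\varrho \neq \varrho$.

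\textbf{The power tower.} Finally, I would observe that $\varrho$ is by definition a fixed point of the one-variable map $t \mapsto e^t = H_3(e,t)$, with the base frozen at $e$ rather than set equal to the argument. The infinite tower $e^{e^{e^{\cdots}}}$, if it has a limit $L$, must satisfy $L = e^L$. By \cref{thm:noreal} there is no real candidate, and the real tower in fact diverges. Over $\C$ the fixed-point equation has the solutions $-W_k(-1)$, and \cref{thm:existence} singles out $\varrho$ in the strip $0 < \Im z < \pi$.

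I would phrase this last part as a statement about the limiting equation only, not about convergence of the tower. Since $\varrho$ is repelling for $\exp$ (stated in the introduction as \cref{thm:duality}), forward iteration of $\exp$ does not converge to $\varrho$, so no convergence claim should be made.
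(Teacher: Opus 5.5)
Your proposal is correct and follows the paper's proof essentially step for step: the level-by-level computation (with exponentiation restricted to $\R_{>0}$, as the paper also does), the check that $\varrho^{\varrho}/\varrho = e^{\varrho^2-\varrho}\neq 1$, and the reading of $\varrho$ as the root singled out by the tower's limiting equation $e^y=y$. You are more careful than the paper at two points: you justify $e^{\varrho^2-\varrho}\neq 1$ via $\Re(\varrho^2-\varrho)=a^2-a-b^2<0$ (your crude bound also needs $a>0$, which the numerics supply), and you attribute the divergence of the real tower to the absence of a real fixed point rather than to the repelling character of the non-real point $\varrho$.
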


\begin{proof}
For addition $x + x = x \implies x = 0$; for multiplication $x^2 = x \implies x \in \{0, 1\}$; for exponentiation $x^x = x$ gives, on $\R_{>0}$, $(x - 1)\log x = 0$, hence $x = 1$. (Directly, $\varrho$ fails $x^x = x$: $\varrho^{\varrho - 1} = e^{\varrho^2 - \varrho} \neq 1$.) For the infinite power tower
\begin{equation*}
{}^\infty x = x^{x^{x^{\cdot^{\cdot^{\cdot}}}}},
\end{equation*}
its limit $y = {}^\infty x$ satisfies $x^y = y$, i.e.\ $y \log x = \log y$; at $x = e$ this is $\log y = y$, equivalently $e^y = y$, whose solution is $\varrho$. Since $\varrho$ is a \emph{repelling} fixed point of $\exp$ (\cref{thm:duality}, $|\varrho| > 1$), the real $e$-tower diverges: $\varrho$ is the value its limiting equation singles out, approached only under the inverse iteration $\log$.
\end{proof}

\begin{remark}
The trivial rest states $0$ and $1$ belong to the additive and multiplicative levels; the first transcendental self-referential value, $\varrho$, appears only once iterated exponentiation enters. The jump from $1$ to $\varrho$ marks the onset of genuine nonlinearity.
\end{remark}

\section{The Exponential--Logarithm Duality}

\begin{theorem}\label{thm:duality}
$\varrho$ is simultaneously an unstable fixed point of $\exp$ and a stable fixed point of $\log$:
\begin{align*}
|\exp'(\varrho)| &= |\varrho| = 1.374557\ldots > 1, \\
|\log'(\varrho)| &= \frac{1}{|\varrho|} = 0.727507\ldots < 1.
\end{align*}
Thus perturbations of $\varrho$ under iteration of $\exp$ spiral outward, while perturbations under iteration of $\log$ spiral inward.
\end{theorem}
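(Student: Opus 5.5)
The plan is to compute the two multipliers directly, using nothing beyond the defining equation $\exp(\varrho)=\varrho$ and the modulus already recorded in \cref{thm:existence}.

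For $\exp$, the derivative is $\exp'(z)=\exp(z)$. So $\exp'(\varrho)=\exp(\varrho)=\varrho$, and therefore $|\exp'(\varrho)|=|\varrho|$. By \cref{thm:identities}, identity \eqref{eq:logmod}, this equals $e^{a}$ with $a=\Re\varrho>0$. Positivity of $a$ follows from $a=b\cot b$ with $b\in(0,\pi/2)$, and the numerical value $b\approx 1.337<\pi/2$ gives this. Hence $|\varrho|=e^a>1$, and \cref{thm:existence} supplies the decimal value $1.374557\ldots$.

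For $\log$, I work on the principal branch near $\varrho$. This is legitimate because $\varrho\notin(-\infty,0]$ and $\Im\varrho\in(0,\pi)$, so $\log$ is holomorphic on a neighbourhood of $\varrho$. There $\log\varrho=\varrho$, so $\varrho$ is a fixed point of $\log$, as already used in \cref{thm:transpose}. Moreover $\log'(z)=1/z$, so $|\log'(\varrho)|=1/|\varrho|=e^{-a}<1$, numerically $1/1.374557\ldots=0.727507\ldots$. Equivalently, the local inverse function theorem makes $\log$ the local inverse of $\exp$ at $\varrho$, so its multiplier is the reciprocal $\lambda^{-1}$ of $\lambda=\varrho$.

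For the dynamical interpretation, I linearize at the fixed point. Writing $z=\varrho+\delta$, one has $\exp(z)-\varrho=\varrho\,\delta+O(\delta^2)$ and $\log(z)-\varrho=\varrho^{-1}\delta+O(\delta^2)$. Each step therefore scales a small perturbation by the factor $|\varrho|^{\pm1}$ and rotates it by the angle $\pm\arg\varrho=\pm b\neq 0$.

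\begin{itemize}
\item Under $\log$: because $|\varrho^{-1}|<1$, a standard contraction estimate on a small disc gives $|\log z-\varrho|\le c|z-\varrho|$ with some $c<1$. The orbit therefore converges to $\varrho$, turning by about $-b$ per step, so it spirals inward.
\item Under $\exp$: the bound $|\exp z-\varrho|\ge c'|z-\varrho|$ with $c'>1$ holds on a small disc, so orbits leave every small neighbourhood of $\varrho$ and rotate by $b$ per step, so they spiral outward.
\end{itemize}

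The only step needing care is the branch issue: the relation $\log\varrho=\varrho$ must hold for the principal logarithm. This follows because $\Im\varrho\in(-\pi,\pi]$, and it is what makes $\varrho$ a genuine fixed point of the chosen $\log$. Everything else is direct computation.
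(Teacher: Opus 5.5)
Your proposal is correct and follows the paper's proof: both compute $\exp'(\varrho)=\exp(\varrho)=\varrho$ and $\log'(\varrho)=1/\varrho$ and read off the moduli $|\varrho|^{\pm1}$. Your additions fill in details the paper leaves implicit without changing the route: the principal-branch check $\log\varrho=\varrho$, the contraction and expansion estimates behind ``spiral'', and the bound $|\varrho|=e^{a}>1$. That last bound does not even need numerics, since $a\le 0$ would force $\cos b\le 0$ and hence $b\ge\pi/2$, while $b=e^{a}\sin b\le 1$.
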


\begin{proof}
$\exp'(z) = \exp(z)$, so $\exp'(\varrho) = \varrho$ with modulus $|\varrho| > 1$. $\log'(z) = 1/z$, so $\log'(\varrho) = 1/\varrho$ with modulus $1/|\varrho| < 1$.
\end{proof}

\begin{corollary}
Since $\varrho$ is attracting for $\log$, the iterated logarithm satisfies $\log^n(z) \to \varrho$ for every $z$ in a neighborhood of $\varrho$; numerically the basin extends across most of the upper half-plane. The iterated exponential $\exp^n(z)$ diverges from $\varrho$ unless $z = \varrho$.
\end{corollary}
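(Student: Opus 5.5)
The corollary has two claims, and I will prove them separately. Only the local statement for $\log$ and the divergence statement for $\exp$ need proof. The remark about the basin covering ``most of the upper half-plane'' is explicitly numerical, so I will leave it as a computed observation.

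For the attracting claim, the principal branch $\log$ is holomorphic on the slit plane $\C^+$. Since $0<\Im\varrho<\pi$, the point $\varrho$ lies in the interior of $\C^+$, and $\log\varrho=\varrho$ holds because $\exp(\varrho)=\varrho$ and $\Im\varrho\in(-\pi,\pi)$. \Cref{thm:duality} gives $|\log'(\varrho)|=1/|\varrho|\approx 0.7275<1$. Fix $\lambda$ with $1/|\varrho|<\lambda<1$. By continuity of $\log'(z)=1/z$, there is a closed disc $D=\{|z-\varrho|\le r\}\subset\C^+$ on which $|1/z|\le\lambda$; it suffices to take $r\le |\varrho|-1/\lambda$. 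For $z\in D$, integrating $\log'$ along the segment from $\varrho$ to $z$ (which lies in the convex set $D$) gives $|\log z-\varrho|\le\lambda|z-\varrho|$. Hence $\log(D)\subset D$, and by induction $|\log^n z-\varrho|\le\lambda^n r\to 0$. The only point needing care is that each iterate stays inside $\C^+$, so that the principal branch is never crossed. This is guaranteed because $D\subset\C^+$ and $D$ is invariant.

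For the repelling claim, I will use the same disc with the inverse estimate. The map $\exp$ is a local inverse of $\log$ near $\varrho$, and $|\exp'(z)|=|e^z|=e^{\Re z}$. Choose $r$ small enough that $e^{\Re z}\ge\mu>1$ on $D$; this is possible since $e^{\Re\varrho}=|\varrho|>1$. Take $z\ne\varrho$ and suppose all iterates $\exp^n(z)$ stay in $D$. The contraction estimate from the first part, applied to $\log$ at each iterate, together with $\log(\exp w)=w$ for $w$ near $\varrho$, gives $|\exp(w)-\varrho|\ge\lambda^{-1}|w-\varrho|$ for $w\in D$. The distance to $\varrho$ therefore grows geometrically, and the orbit must leave $D$, a contradiction. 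So every orbit starting at $z\ne\varrho$ escapes any sufficiently small neighbourhood of $\varrho$, which is the sense in which $\exp^n(z)$ ``diverges from $\varrho$''.

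The main obstacle is interpreting that phrase correctly. Taken literally as ``never converges to $\varrho$'', the claim is false: any preimage of $\varrho$ under $\exp^k$ other than $\varrho$ itself, for instance $\varrho+2\pi i$, reaches $\varrho$ after finitely many steps. I will therefore state and prove the local version, that no orbit other than the fixed one remains near $\varrho$. I will add a remark that points in the backward orbit of $\varrho$ land on it exactly, while no other point converges to it, since convergence would eventually place the orbit in $D$ and force it to be stationary.
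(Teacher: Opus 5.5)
Your proof is correct and is essentially the argument the paper leaves implicit: the paper gives no separate proof, reading both claims directly off the multipliers $|\log'(\varrho)| = 1/|\varrho| < 1$ and $|\exp'(\varrho)| = |\varrho| > 1$ of \cref{thm:duality}, and your contraction estimate on $D$, with its inverted form for $\exp$, is the standard way to make that precise. You are also right that the corollary's last sentence is false as literally stated, since $\exp(\varrho + 2\pi i) = \varrho$, and even a local reading fails: the points $\log^n(\varrho + 2\pi i) \neq \varrho$ converge to $\varrho$ (principal $\log$ maps the upper half-plane into itself, so Schwarz--Pick applies), yet they reach $\varrho$ exactly after $n+1$ steps of $\exp$; so the statement you actually prove, that $\exp^n(z) \to \varrho$ only for iterated preimages of $\varrho$, is the correct replacement.
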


\begin{theorem}[Koenigs Linearization]\label{thm:koenigs}
In a neighborhood of $\varrho$, $\exp$ is analytically conjugate to the linear map $w \mapsto \varrho w$. Explicitly, there exists a Koenigs coordinate $\varphi$ such that
\begin{equation*}
\varphi(\exp(z)) = \varrho \cdot \varphi(z)
\end{equation*}
for $z$ sufficiently close to $\varrho$, with $\varphi'(\varrho) = 1$. The inverse coordinate $\varphi^{-1}$ satisfies
\begin{equation*}
\exp(z) = \varphi^{-1}(\varrho \cdot \varphi(z)).
\end{equation*}
\end{theorem}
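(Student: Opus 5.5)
The plan is to invoke Koenigs' linearization theorem for repelling fixed points, applied to the local inverse of $\exp$ at $\varrho$. That local inverse is exactly the principal branch of $\log$, which fixes $\varrho$ by \cref{thm:identities}. We need a fixed point whose multiplier $\lambda$ satisfies $0<|\lambda|\neq 1$. By \cref{thm:duality}, $\exp'(\varrho)=\varrho$ with $|\varrho| = 1.3745\ldots>1$. So $\varrho$ is repelling for $\exp$, and it is attracting for $\log$ with multiplier $\lambda = 1/\varrho$, where $0<|\lambda|<1$.

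First step: work with $g = \log$ near $\varrho$. This is a holomorphic germ with $g(\varrho)=\varrho$ and $g'(\varrho)=1/\varrho$. Define
\begin{equation*}
\varphi_n(z) = \varrho^{\,n}\bigl(g^{n}(z)-\varrho\bigr).
\end{equation*}
Taylor expansion gives $|g(z)-\varrho - \lambda(z-\varrho)| \le C|z-\varrho|^2$ on a small disc $D$ around $\varrho$. Pick the radius so small that $|g(z)-\varrho|\le \mu|z-\varrho|$ with $|\lambda|<\mu<1$ and $\mu^2<|\lambda|$. Then $|g^n(z)-\varrho|\le \mu^n|z-\varrho|$, and
\begin{equation*}
|\varphi_{n+1}(z)-\varphi_n(z)| \le |\varrho|^{n+1} C \mu^{2n}|z-\varrho|^2 = O\bigl((\mu^2/|\lambda|)^n\bigr).
\end{equation*}
This bound is summable, so $\varphi_n\to\varphi$ uniformly on $D$ and $\varphi$ is holomorphic. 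We have $\varphi_n'(\varrho)=1$ for every $n$, hence $\varphi'(\varrho)=1$ and $\varphi$ is injective on a smaller disc. Passing to the limit in $\varphi_n(g(z)) = \varrho^{-1}\varphi_{n+1}(z)$ gives $\varphi(\log z) = \varrho^{-1}\varphi(z)$.

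Second step: transfer the identity to $\exp$. For $z$ close enough to $\varrho$, $\exp(z)$ lies in $D$, and $\log(\exp z)=z$ on the principal branch, since $\Im z$ is near $\Im\varrho\in(0,\pi)$. Substituting $\exp(z)$ into the relation for $\log$ gives $\varphi(z)=\varrho^{-1}\varphi(\exp z)$, that is, $\varphi(\exp z)=\varrho\,\varphi(z)$. Because $\varphi'(\varrho)=1$, $\varphi$ has a holomorphic inverse near $0$. Applying $\varphi^{-1}$ yields $\exp(z)=\varphi^{-1}(\varrho\varphi(z))$, provided $\varrho\varphi(z)$ stays in the domain of $\varphi^{-1}$, which we arrange by shrinking the neighborhood.

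The main obstacle is the domain bookkeeping rather than the convergence estimate. We must choose nested neighborhoods on which three things hold simultaneously: the principal $\log$ inverts $\exp$, $\log$ maps the disc into itself, and $\varphi$ is injective. I would handle this by first fixing $D$ from the contraction estimate, then shrinking to a disc $D'$ with $\exp(D')\subset D$ and $\varrho\varphi(D')\subset\varphi(D)$. The second inclusion is possible by continuity, since $\exp(\varrho)=\varrho$.
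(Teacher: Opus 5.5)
Your proof is correct and is essentially the paper's own argument. The paper just cites Koenigs' theorem for a repelling fixed point. You supply the standard proof of that case: you build the coordinate $\varphi=\lim_{n}\varrho^{n}(g^{n}-\varrho)$ for the attracting inverse branch $g=\log$, then transfer $\varphi\circ\log=\varrho^{-1}\varphi$ to $\exp$, which also gives the paper's follow-up corollary on $\log$ at no extra cost.

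One piece of domain bookkeeping needs tightening. Choose $D'$ with $\exp(D')\subset D_0$, where $D_0$ is the disc on which $\varphi$ is injective, rather than merely $\exp(D')\subset D$. Then $\varphi^{-1}(\varrho\varphi(z))=\varphi^{-1}(\varphi(\exp z))=\exp z$ follows directly, and your second inclusion $\varrho\varphi(D')\subset\varphi(D)$ holds automatically.
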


\begin{proof}
This is a standard application of the Koenigs theorem~\cite{koenigs1884,milnor2006} for analytic functions with a repelling fixed point: $\exp'(\varrho) = \varrho$ with $|\varrho| > 1$ guarantees local linearizability.
\end{proof}

\begin{corollary}
In Koenigs coordinates, $\log$ is conjugate to $w \mapsto w/\varrho$, providing local linearization of both $\exp$ and $\log$ simultaneously.
\end{corollary}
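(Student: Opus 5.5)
The statement to prove is the last corollary: in the Koenigs coordinate $\varphi$ of \cref{thm:koenigs}, $\log$ is conjugate to $w \mapsto w/\varrho$. The plan is to obtain this by inverting the conjugacy already established for $\exp$, rather than building a second Koenigs coordinate. The key point is that near $\varrho$ the principal branch of $\log$ is a genuine local inverse of $\exp$. This holds because $\exp'(\varrho) = \varrho \neq 0$ and $\log\varrho = \varrho$, the latter since $\Im\varrho \in (0,\pi)$ by \cref{thm:existence}, so the principal branch applies.

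The steps, in order, are as follows.

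\begin{enumerate}
\item Choose a disc $D$ around $\varrho$ on which three things hold: $\varphi$ is univalent (possible since $\varphi'(\varrho)=1$), the functional equation $\varphi(\exp z) = \varrho\,\varphi(z)$ is valid, and $\exp$ maps $D$ biholomorphically onto its image. Because $\varrho$ is repelling, $\exp(D) \supset D$ once $D$ is small enough.

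\item Use continuity to choose a smaller disc $D' \subset D$ with $\log(D') \subset D$. This works because $\log$ is continuous at $\varrho$ and fixes it, and on $D'$ the principal $\log$ agrees with $(\exp|_D)^{-1}$.

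\item For $z \in D'$, set $u = \log z \in D$. Then $\exp u = z$, and the functional equation gives $\varphi(z) = \varphi(\exp u) = \varrho\,\varphi(u)$.

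\item Dividing by $\varrho \neq 0$ yields $\varphi(\log z) = \varphi(z)/\varrho$, which is the claimed conjugacy. It is analytic because $\varphi$ is univalent on $D$, so $\log = \varphi^{-1}\circ(w\mapsto w/\varrho)\circ\varphi$ on $D'$.
\end{enumerate}

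Since this $\varphi$ is the same coordinate from \cref{thm:koenigs}, both $\exp$ and $\log$ are linearized simultaneously. As a consistency check, the multiplier of $\log$ at $\varrho$ is $1/\varrho$, matching \cref{thm:duality}. The conjugacy also shows directly that $\varrho$ is attracting for $\log$, since $|1/\varrho|<1$.

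The only delicate step is the domain bookkeeping in steps 1 and 2. One must confirm that the principal branch of $\log$, not some other branch, is the local inverse; this holds because $\varrho$ lies in the strip $|\Im z|<\pi$, away from the cut. One must also confirm that $\log z$ stays inside the domain where the Koenigs relation holds. Both follow from continuity at the fixed point.
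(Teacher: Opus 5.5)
Your argument is correct, and it is the route the paper intends. The paper gives no separate proof; it treats the corollary as immediate from \cref{thm:koenigs} by inverting the conjugacy, because the principal $\log$ is the local inverse of $\exp$ at $\varrho$ (since $\log\varrho = \varrho$ and $\exp'(\varrho) = \varrho \neq 0$), and your steps make explicit the domain bookkeeping that the paper leaves unstated.
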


\section{The \texorpdfstring{$\varrho$}{rho}-Quasicrystal}

\begin{definition}
The $\varrho$-lattice is the set
\begin{equation*}
\Lambda_\varrho = \{\varrho^m \cdot \bar{\varrho}^{\,n} : m, n \in \Z\},
\end{equation*}
where $\bar{\varrho}$ is the complex conjugate of $\varrho$.
\end{definition}

\begin{theorem}\label{thm:quasicrystal}
In log-polar coordinates $(\log r, \theta)$, $\Lambda_\varrho$ is the rank-two lattice generated by the images $(\log|\varrho|, \arg\varrho)$ and $(\log|\varrho|, -\arg\varrho)$ of $\varrho$ and $\bar\varrho$; by \eqref{eq:logmod} and \eqref{eq:argb} these are $(\Re\varrho, \Im\varrho)$ and $(\Re\varrho, -\Im\varrho)$. Numerical evidence indicates $\arg(\varrho)/\pi \notin \mathbb{Q}$; granting this, $\Lambda_\varrho$ is aperiodic, with no rotational symmetry of finite order.
\end{theorem}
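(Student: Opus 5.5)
The plan is to pass to log-polar coordinates through the map $L(z) = (\log|z|, \arg z)$, viewed as a homomorphism from $\C^\times$ to $\R \times (\R/2\pi\Z)$. Because $L$ turns products into sums, we get
\begin{equation*}
L(\varrho^m\bar\varrho^{\,n}) = m\,(\log|\varrho|,\arg\varrho) + n\,(\log|\varrho|,-\arg\varrho),
\end{equation*}
where the angular coordinate is read modulo $2\pi$. So $L(\Lambda_\varrho)$ is the subgroup generated by the images of $\varrho$ and $\bar\varrho$. Substituting \eqref{eq:logmod} and \eqref{eq:argb} rewrites these generators as $(\Re\varrho, \pm\Im\varrho)$.

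Next comes the rank-two claim. I would work in the universal cover $\R^2$, lifting $\arg$ additively along the spiral of \cref{fig:spiral}. There the two vectors $(a,b)$ and $(a,-b)$ have determinant $-2ab$. This is nonzero because $a>0$ and $b>0$, which follows from \cref{thm:existence} or from \eqref{eq:acot} with $b\in(0,\pi/2)$. Hence the vectors are linearly independent over $\R$ and generate a genuine rank-two lattice $\{(m+n)a, (m-n)b\}$. Its fundamental domain has area $2ab$. I would also note that $(m,n)\mapsto \varrho^m\bar\varrho^{\,n}$ is injective. The point is that $|\varrho^m\bar\varrho^{\,n}| = e^{(m+n)a}$ fixes $m+n$, and then the argument $(m-n)b$ fixes $m-n$, at least in the lifted coordinate.

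Then comes aperiodicity, conditional on $b/\pi\notin\mathbb{Q}$. I would project to the actual plane, where the angle lives in $\R/2\pi\Z$. Suppose a rotation $z\mapsto e^{2\pi i/q}z$ of finite order $q\ge 2$ maps $\Lambda_\varrho$ into itself. Take the element $1 = \varrho^0\bar\varrho^{\,0}$. Its image $e^{2\pi i/q}$ has modulus $1$, so it must equal $\varrho^m\bar\varrho^{\,-m}$ for some $m$. This forces $2mb \equiv 2\pi/q \pmod{2\pi}$, so $b/\pi$ is rational, a contradiction. Two degenerate cases need checking. For $m=0$ we would get $2\pi/q\equiv 0$, which is impossible for $q\ge2$. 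For $m\neq0$ we get $b/\pi=(1+kq)/(mq)$, which is rational.

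The same computation shows the unit-circle slice $\{\varrho^m\bar\varrho^{\,-m}\} = \{e^{2imb}\}$ is infinite and dense by Weyl/Kronecker. So no translation in log-polar coordinates, combined with finite rotational symmetry, can make the planar set periodic.

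The main obstacle is conceptual rather than computational. One must state precisely which notion of "aperiodic" is meant, since in the lifted coordinates the set is an honest periodic lattice. I would define it as the absence of nontrivial finite-order rotational symmetry in $\C$, equivalently the non-closure of the angular orbit. The irrationality of $\Im\varrho/\pi$ cannot be proved here and is taken as the stated hypothesis.
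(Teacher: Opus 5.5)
Your proposal is correct and follows essentially the same route as the paper: log-polar coordinates turn multiplication by $\varrho,\bar\varrho$ into translation by $(\Re\varrho,\pm\Im\varrho)$, and granting $\Im\varrho/\pi\notin\mathbb{Q}$ excludes finite-order rotational symmetry. Your version is more careful than the paper's, which leaves three things implicit: the nondegeneracy $\Re\varrho\,\Im\varrho\neq 0$ that makes the lattice genuinely rank two; the image-of-$1$ argument, using that the unit-modulus elements of $\Lambda_\varrho$ are exactly $e^{2im\Im\varrho}$, which turns a rotation of order $q$ into a nonzero integer multiple of $\arg\varrho$ lying in $\pi\mathbb{Z}$; and the fact that the rank-two claim holds in the lifted angle coordinate, whereas with the angle read modulo $2\pi$ it too needs the irrationality hypothesis.
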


\begin{proof}
Multiplication by $\varrho$ and $\bar\varrho$ translates $(\log r, \theta)$ by $(\Re\varrho, \pm\Im\varrho)$, so $\Lambda_\varrho$ is the lattice they generate. The continued fraction of $\arg(\varrho)/\pi = \Im(\varrho)/\pi \approx 0.42565$ begins $[0; 2, 2, 1, 6, 3, \ldots]$ and shows no periodicity through the computed terms, consistent with irrationality; we do not have a proof, and a rigorous one appears to require transcendence methods. Granting $\arg(\varrho)/\pi \notin \mathbb{Q}$, no positive integer $n$ makes $n \cdot \arg(\varrho)$ an integer multiple of $\pi$, hence $\Lambda_\varrho$ has no finite rotational symmetry.
\end{proof}

\begin{remark}
The $\varrho$-lattice is self-similar under multiplication by $\varrho^2$, the map $(m,n) \mapsto (m+2, n)$: a spiral similarity that scales moduli by $|\varrho|^2$ and rotates by $2\arg\varrho$ (\cref{fig:quasicrystal}).
\end{remark}

\begin{figure}[ht]
\centering
\includegraphics[width=0.52\textwidth]{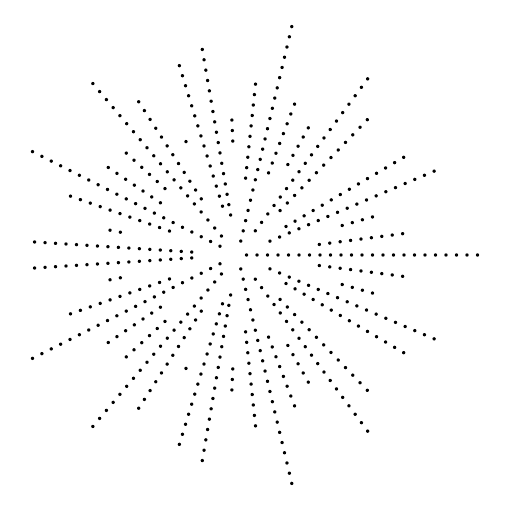}
\caption{The lattice $\Lambda_\varrho = \{\varrho^m \bar\varrho^{\,n}\}$ for $|m|,|n| \le 11$, drawn with radial coordinate proportional to $\log|z| = (m+n)\Re\varrho$ (so the geometrically growing moduli $|\varrho|^{m+n}$ appear as equally spaced shells) and true angle $\arg z = (m-n)\Im\varrho$. Each ray collects the points of constant $m-n$; because $\arg\varrho/\pi$ is irrational these directions never repeat, so $\Lambda_\varrho$ carries no rotational symmetry of finite order (\cref{thm:quasicrystal}), while remaining self-similar under multiplication by $\varrho^2$.}
\label{fig:quasicrystal}
\end{figure}

\section{The Irreducible Gap}

\begin{theorem}\label{thm:gap}
The product $\Re(\varrho) \cdot \pi$ satisfies
\begin{equation*}
\Re(\varrho) \cdot \pi = 0.99944\ldots,
\end{equation*}
with gap $\delta = 1 - \Re(\varrho)\,\pi = 5.604 \times 10^{-4}$ relative to unity. This gap is irreducible: imposing $\Re(z) = 1/\pi$ as a third constraint on $z$ in addition to \eqref{eq:real}--\eqref{eq:imag} overdetermines the system.
\end{theorem}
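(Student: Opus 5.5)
The statement has two parts: a numerical value, and an "irreducibility" claim that needs an actual argument.

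For the numerical part I will use the value $a = \Re\varrho = 0.31813150520473746\ldots$ from \cref{thm:existence}. Multiplying by $\pi$ gives $a\pi = 0.999439\ldots$, so $\delta = 1 - a\pi \approx 5.604\times10^{-4}$. To make this rigorous I will bracket $b = \Im\varrho$ in a short interval on which $g(b) = b\cot b - \log(b/\sin b)$ changes sign; this is the function whose root is characterized in \cref{cor:b}. Interval arithmetic then bounds $a = b\cot b$ tightly enough to show that $a\pi$ lies strictly between $0.9994$ and $0.9995$. In particular $a \neq 1/\pi$.

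For irreducibility I will read ``overdetermines'' concretely: the three equations $e^a\cos b = a$, $e^a\sin b = b$, $a = 1/\pi$ in two real unknowns have no common solution. The plan is as follows.

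\begin{enumerate}
\item Put $a = 1/\pi$ into \eqref{eq:real} and \eqref{eq:imag}. Squaring and adding gives $b^2 = e^{2/\pi} - \pi^{-2}$, which determines $b > 0$ explicitly, with $b \approx 1.3386$.
\item Check \eqref{eq:acot} with this value, i.e.\ whether $b\cot b = 1/\pi$. Both $b$ and $\cot b$ are continuous, and $g$ has a unique root in $(0,\pi)$ by \cref{cor:b}.
\item Conclude from uniqueness: any $z$ in the strip satisfying \eqref{eq:real}--\eqref{eq:imag} must equal $\varrho$. Then $\Re z = 1/\pi$ would force $a\pi = 1$, contradicting the certified bound from the numerical part.
\end{enumerate}

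So the ``gap'' reduces to the strict inequality $\Re\varrho \neq 1/\pi$, together with the uniqueness in \cref{thm:existence}. It is also worth noting, as context, that the two equations already determine $z$ as an isolated solution. The Jacobian of $\exp(z) - z$ at $\varrho$ is $\varrho - 1 \neq 0$, so there is no free parameter left to tune toward $1/\pi$. That is the precise sense in which a third constraint overdetermines the system.

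The main obstacle is making the numerical separation certified rather than floating-point. $\delta$ is about $5.6\times10^{-4}$, so I need a rigorous enclosure of $b$ to roughly $10^{-5}$. I would get it by evaluating $g$ with outward rounding at two endpoints, say $1.33723$ and $1.33724$, and confirming opposite signs. I would also bound $|d(b\cot b)/db|$ on that interval to propagate the enclosure to $a$. The paper's remark that a clean relation fails ``by a hair'' should be stated as nothing more than this certified inequality, not as a transcendence claim about $\pi\Re\varrho$.
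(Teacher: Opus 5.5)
Your argument is correct. Its operative part, step 3 together with the certified enclosure, takes a different route from the paper's.

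\textbf{The two routes.} The paper fixes $a = 1/\pi$ and solves the two defining equations separately for $b$. The cosine equation gives $b_1 = \arccos(e^{-1/\pi}/\pi) \approx 1.33714$, and the sine equation $e^{1/\pi}\sin b_2 = b_2$ gives $b_2 \approx 1.33759$. The mismatch $b_2 - b_1 \approx 4.5\times10^{-4}$ exhibits the incompatibility of the three constraints directly, in floating point. You instead use the uniqueness in \cref{thm:existence} to reduce the question to the single inequality $\Re\varrho \neq 1/\pi$. You then certify that inequality by bracketing the root of $g$ with outward rounding.

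Your bracket suffices. $g$ is decreasing there, with $g(1.33723) \approx +10^{-5}$ and $g(1.33724) \approx -7\times10^{-6}$. Since $b\cot b$ is decreasing on $(0,\pi)$, the endpoint values give $\Re(\varrho)\,\pi \in (0.99942, 0.99947)$.

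Your route buys rigor, including a proof of the numerical statement, which the paper only computes. The paper's route measures the tension as a discrepancy in $b$ and needs no enclosure of $\varrho$ at all.

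Your value $0.999439\ldots$ is the correct expansion; in fact $\Re(\varrho)\,\pi = 0.9994396\ldots$. The $0.99944\ldots$ in the statement is a rounding, not a truncation. So what can be certified is $\Re(\varrho)\,\pi \in [0.999435, 0.999445)$, which needs a bracket on $b$ of width about $10^{-6}$.

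\textbf{Smaller points.}
\begin{itemize}
\item In step 1, $e^{2/\pi} - \pi^{-2} \approx 1.78876$, so $b \approx 1.33745$, not $1.3386$.
\item Step 1 is what makes step 3 cover every $z$, not only $z$ in the strip. It forces $\Im z = \pm b_*$ with $b_* = (e^{2/\pi}-\pi^{-2})^{1/2} \in (0,\pi/2)$. So any solution lies in $0<|\Im z|<\pi$ and, by uniqueness and conjugation, would have to be $\varrho$ or $\bar\varrho$. Say this explicitly.
\item The step 2 you left unfinished is the shortest complete argument, and the one closest to the paper's. Given the modulus match from step 1, the system is consistent only if $b_*\cot b_* = 1/\pi$. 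But $b_*\cot b_* \approx 0.31789$. This is a single certified evaluation at an explicit point, needing neither root-finding nor the uniqueness theorem.
\end{itemize}
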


\begin{proof}
The two equations \eqref{eq:real}--\eqref{eq:imag} determine $a$ and $b$ uniquely. Explicitly, forcing $a = 1/\pi$ and solving the $\cos$ equation gives a candidate $b_1 = \arccos(e^{-1/\pi}/\pi)$, while the $\sin$ equation demands $b_2$ such that $e^{1/\pi} \sin b_2 = b_2$. Numerically, $b_1 \approx 1.33714$ and $b_2 \approx 1.33759$; they differ by approximately $4.5 \times 10^{-4}$. The actual $\varrho$ settles at the compromise $b = 1.33724$ where both equations \eqref{eq:real}--\eqref{eq:imag} are satisfied, shifting $\Re(\varrho)$ from $1/\pi$ to $0.31813$.
\end{proof}

\begin{remark}
Geometrically, the gap measures the distance between the self-referential curve (where $z = e^z$) and the ``closure curve'' (where $\pi \cdot \Re(z) = 1$). Equivalently, $|\varrho^\pi|/e \approx 0.99944$, so exponentiation by $\pi$ nearly sends $\varrho$ to $e$, but not quite. The three transcendental constants $\{\varrho, \pi, e\}$ form an almost-closed cycle under exponentiation.
\end{remark}

\section{The Quantized Spectrum}

\begin{theorem}\label{thm:spectrum}
For each integer $n \geq 0$ the equation $\exp(z) - \log(z) = 2\pi i n$ ($\log$ principal) has a solution $z_n$, with $z_0 = \varrho$; in every case we computed, the solution is unique. As $n \to \infty$,
\begin{align*}
\Re(z_n) &\approx \log(2\pi n), \\
\Im(z_n) &\approx \frac{\pi}{2} + o(1),
\end{align*}
and the levels $E_n = |\exp(z_n)| = e^{\Re(z_n)}$ grow asymptotically linearly, $E_n \approx 2\pi n - 0.92$.
\end{theorem}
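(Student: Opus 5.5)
The plan is to recast the equation as a fixed-point problem that is a contraction for large $n$, to cover the finitely many small $n$ by certified numerics, and to read the asymptotics off the fixed-point equation.

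\textbf{The case $n=0$.} By \cref{thm:identities}, $\arg\varrho = \Im\varrho \in (0,\pi)$, so the principal logarithm satisfies $\log\varrho = \varrho$. Hence $\exp(\varrho) - \log(\varrho) = 0$, and $z_0 = \varrho$.

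\textbf{Fixed-point reformulation for $n \ge 1$.} Rewrite the equation as $e^z = 2\pi i n + \log z$. Near the expected location this is equivalent to $z = G_n(z)$, where
\begin{equation*}
G_n(z) := \operatorname{Log}\bigl(2\pi i n + \log z\bigr),
\end{equation*}
with $\operatorname{Log}$ the principal branch. Since $2\pi i n + \log z$ lies near the positive imaginary axis, the imaginary part of $G_n(z)$ lies near $\pi/2$. I take the base point $\zeta_n := \log(2\pi n) + i\pi/2$ and the closed disk $D_n := \{|z - \zeta_n| \le 1\}$.

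For $z \in D_n$ we have $|\log z| = O(\log\log n)$. This gives two estimates:
\begin{equation*}
|G_n(z) - \zeta_n| = \Bigl|\operatorname{Log}\Bigl(1 + \tfrac{\log z}{2\pi i n}\Bigr)\Bigr| = O\Bigl(\tfrac{\log\log n}{n}\Bigr),
\end{equation*}
and
\begin{equation*}
|G_n'(z)| = \frac{1}{|z|\,|2\pi i n + \log z|} = O\Bigl(\frac{1}{n \log n}\Bigr).
\end{equation*}
So for all $n \ge N_0$, with $N_0$ explicit, $G_n$ maps $D_n$ into itself and is a contraction there. Banach's fixed-point theorem then gives a unique solution $z_n \in D_n$. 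Every fixed point of $G_n$ solves the original equation, since applying $\exp$ recovers $e^{z} = 2\pi i n + \log z$.

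\textbf{Asymptotics.} The fixed-point equation gives
\begin{equation*}
z_n = \zeta_n + O\bigl(\log\log n / n\bigr),
\end{equation*}
so $\Re z_n = \log(2\pi n) + o(1)$ and $\Im z_n = \pi/2 + o(1)$. For the levels, $e^{z_n} = 2\pi i n + \log z_n$, and therefore
\begin{equation*}
E_n = \bigl|2\pi i n + \log z_n\bigr| = 2\pi n + \arg z_n + O\bigl((\log\log n)^2/n\bigr).
\end{equation*}
Since $\arg z_n \approx \pi/\bigl(2\log(2\pi n)\bigr) \to 0$, this proves linear growth with slope $2\pi$ and offset tending to $0$ from above.

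The stated offset $-0.92$ must therefore come from a finite-range numerical fit, not from the true limit. I would reconcile it with the computed data, or reinterpret it as fit-dependent, rather than try to prove it.

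\textbf{Small $n$ and the main obstacle.} For $1 \le n < N_0$, I would prove existence by a Krawczyk or interval-Newton certificate around the computed root, using $F(z) = e^z - \log z - 2\pi i n$ and $F'(z) = e^z - 1/z$.

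The main obstacle is \emph{global} uniqueness. The contraction argument only excludes other solutions inside $D_n$, and other solutions with $\Im(e^z)$ in a different sheet or $|\Im z|$ near $\pi$ are not ruled out. This is why the statement claims uniqueness only as computed. To upgrade it, I would first try an argument-principle count for $F$ on a large rectangle in the slit plane, separately controlling the boundary contributions from $e^z$ and from the branch cut of $\log$.
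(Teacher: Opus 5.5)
Your argument is correct, and it takes a genuinely different and stronger route than the paper. The paper handles $n=0$ exactly as you do. For $n\ge 1$, however, it gives only the dominant-balance heuristic ($e^{\Re z}\approx 2\pi n$, $\Im z\to\pi/2$) plus damped-Newton numerics (seventeen roots with $n\le 28$), and it concedes that existence for each $n$ is not proved.

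Your contraction $G_n$ on $D_n$ turns that heuristic into a proof. It gives existence for every $n\ge1$. It gives uniqueness inside $D_n$: a root in $D_n$ is automatically a fixed point of $G_n$, since $e^z=w$ forces $z\in\operatorname{Log}w+2\pi i\Z$, and $\Im z\in[\pi/2-1,\pi/2+1]$ leaves only the principal value. It also gives the asymptotics with explicit error $O(\log\log n/n)$. Together these settle the existence half of the paper's Open Question~3.

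You do not need interval arithmetic for small $n$. On $D_1$ one has $|\log z|<1.7$, so $G_1(D_1)\subset\{|z-\zeta_1|\le 0.31\}$ and $|G_1'(z)|\le 1/(|z|(2\pi+\arg z))<0.11$. These bounds only improve as $n$ grows, so $N_0=1$.

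There are two corrections to your commentary.

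First, the offset $-0.92$ is neither the limit nor a finite-range effect for these roots. For every root with $\Im z_n>0$,
$E_n=|2\pi i n+\log z_n|\ge 2\pi n+\arg z_n>2\pi n$,
so the stated offset is incompatible with the paper's own claim $\Im z_n\to\pi/2$. A negative offset of that size is what roots on the sheet $-2\pi<\Im z<-3\pi/2$ exhibit, where $\arg z<0$. For example, for $n=1$ there is a root near $1.67-5.03i$ with $E\approx 2\pi-0.98$.

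Second, an argument-principle count cannot upgrade uniqueness, because global uniqueness is false:
\begin{itemize}
\item $\bar\varrho$ solves the $n=0$ equation besides $\varrho$.
\item So does $e^{z_1}\approx 0.88+6.92i$, since $\exp(e^{z_1})=z_1=\log e^{z_1}$.
\item For $n\ge 1$ and every $k\in\Z$, the map $z\mapsto\operatorname{Log}(2\pi i n+\log z)+2\pi i k$ is a contracting self-map of $\{\log\pi\le\Re z\le M,\ 2\pi k\le\Im z\le 2\pi k+\pi/2\}$ for large $M$. This gives a root in each such strip.
\end{itemize}
The correct uniqueness statement is the local one your contraction already delivers.
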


\begin{proof}
For $n = 0$: $\exp(\varrho) = \varrho$, and taking the principal logarithm gives $\log(\varrho) = \varrho$ (valid since $0 < \Im\varrho < \pi$), so $\exp(\varrho) - \log(\varrho) = 0$. For $n \geq 1$ the asymptotic form is forced: writing $\exp(z) = e^{\Re z}e^{i\Im z}$, matching the dominant term $2\pi i n$ on the right requires $e^{\Re z} \approx 2\pi n$ and $\Im z \to \pi/2$, the term $\log z = \log|z| + i\arg z$ supplying the $O(1)$ correction. Solutions were located by damped Newton iteration (20,000 starts on a grid in $(0,4) \times (0,8) \subset \C$), yielding seventeen distinct $z_n$ with $n$ up to $28$, each verified to $10^{-14}$. Existence and uniqueness for every $n$ are supported numerically, not proved.
\end{proof}

\begin{remark}
The quantization unit is $2\pi i$, the period of $\exp$ in the imaginary direction. The ``Planck constant'' of this quantization is the exponential period itself, not an externally supplied $\hbar$.
\end{remark}

\section{The Octonionic Fixed-Point Sphere}

\begin{theorem}\label{thm:octonion}
Let $\Oct$ denote the octonions~\cite{baez2002} with norm $|\cdot|$ and exponential map $\exp_{\Oct}$ defined by the power series. The fixed points $X = s + v$ of $\exp_{\Oct}$ with imaginary radius $|v| \in (0, \pi)$ form the sphere
\begin{equation*}
\{\Re(\varrho) \cdot 1 + \Im(\varrho) \cdot U : U \in S^6\},
\end{equation*}
where $S^6$ is the unit sphere in the imaginary octonions $\Im(\Oct) \cong \R^7$. The fixed points of larger radius give the further shells of \cref{cor:shells}.
\end{theorem}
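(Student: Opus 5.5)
The approach is to reduce everything to the complex case through the subalgebra generated by a single octonion. Write $X = s + v$ with $s \in \R$ and $v \in \Im(\Oct)$, and suppose $v \neq 0$. Set $r = |v|$ and $U = v/r \in S^6$. The key algebraic fact is that $U^2 = -|U|^2 = -1$ for any unit imaginary octonion. This holds because conjugation negates imaginary octonions and $v\bar v = |v|^2$. Consequently the real span of $1$ and $U$ is a subalgebra $\C_U \cong \C$, with $U \mapsto i$. By Artin's theorem (or directly, since it is generated by one element), this subalgebra is associative. Hence every power $X^n$ lies in $\C_U$ and is computed exactly as in $\C$, and the power series gives
\[
\exp_{\Oct}(s + rU) = e^{s}(\cos r + U \sin r).
\]
This is the standard formula, and it is the only place where the non-associativity of $\Oct$ could interfere. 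Power-associativity of $\Oct$ takes care of it.

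Next I compare components. Because $1$ and $U$ are linearly independent over $\R$, the equation $\exp_{\Oct}(X) = X$ is equivalent to the pair
\[
e^s \cos r = s, \qquad e^s \sin r = r .
\]
These are exactly \eqref{eq:real}--\eqref{eq:imag}. So $X$ is fixed if and only if $z = s + ri \in \C$ satisfies $\exp(z) = z$, under the isomorphism $\C_U \cong \C$. The condition $r = |v| \in (0,\pi)$ puts $z$ in the strip $0 < \Im z < \pi$. By \cref{thm:existence} this forces $z = \varrho$, that is, $s = \Re\varrho$ and $r = \Im\varrho$.

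Conversely, take any $U \in S^6$. The element $X = \Re\varrho + \Im\varrho\,U$ satisfies the two real equations, since $\varrho$ does. Hence $X$ is fixed, and the whole sphere consists of fixed points.

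The last step is bookkeeping. Writing $U$ and $-U$ separately absorbs $\bar\varrho$, because $s + r(-U)$ with $r>0$ is the same point as $s + rU$ with the sign moved. That is why only the positive radius $r = |v|$ appears, and the conjugate solution gives nothing new. Excluding $v = 0$ is justified by \cref{thm:noreal}. Solutions from other branches $-W_k(-1)$ have $|\Im| > \pi$ and yield the further shells. The main (mild) obstacle is justifying the closed form of $\exp_{\Oct}$ rigorously. The plan there is to note that the partial sums lie in the closed subspace $\C_U$, and that the map $\C \to \C_U$ is an isometric algebra isomorphism, so convergence and the value of the series transfer directly from $\C$.
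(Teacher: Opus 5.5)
Your proposal is correct and follows the paper's route. You restrict to the associative subalgebra generated by $1$ and $v$ (isomorphic to $\C$), evaluate $\exp_{\Oct}(s+rU) = e^s(\cos r + U\sin r)$, match scalar and imaginary components to recover \eqref{eq:real}--\eqref{eq:imag}, and invoke the uniqueness in \cref{thm:existence}. You also write out two things the paper leaves implicit: convergence of the series, via the isometric algebra isomorphism onto the closed subspace $\C_U$, and the converse direction showing that every point of the sphere is fixed.
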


\begin{proof}
Write $X = s \cdot 1 + v$ with $v$ purely imaginary, $r = |v|$. Since $1$ and $v$ generate the commutative associative subalgebra $\R[v] \cong \C$, the exponential is evaluated as in $\C$:
\begin{equation*}
\exp_{\Oct}(X) = e^s \left( \cos r + \frac{\sin r}{r} v \right).
\end{equation*}
Setting $\exp_{\Oct}(X) = X$ and equating scalar and imaginary parts gives
\begin{align*}
e^s \cos r &= s, \\
e^s \frac{\sin r}{r} v &= v.
\end{align*}
For $v \neq 0$ the imaginary equation forces $e^s \sin r = r$, which is \eqref{eq:imag}, and the scalar equation $e^s \cos r = s$ is \eqref{eq:real}. By \cref{thm:existence} the unique solution with $r \in (0, \pi)$ is $s = \Re(\varrho)$, $r = \Im(\varrho)$, while $U = v/r$ is free on $S^6$. The fixed points of radius $r \in (0, \pi)$ are therefore exactly $\Re(\varrho)\cdot 1 + \Im(\varrho)\,U$.
\end{proof}

\begin{corollary}
Every point of this sphere has modulus $|\varrho|$, a value invariant under the $\Gtwo$ automorphism group of the octonions.
\end{corollary}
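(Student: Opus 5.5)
The plan is to reduce the statement to two separate facts: every point of the sphere has norm $|\varrho|$, and the $\Gtwo$ automorphisms of $\Oct$ preserve both the norm and the sphere. By \cref{thm:octonion}, an arbitrary point of the sphere has the form $X = \Re(\varrho)\cdot 1 + \Im(\varrho)\,U$ with $U \in S^6 \subset \Im(\Oct)$.

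The norm on $\Oct$ is the Euclidean norm on $\R^8 = \R\cdot 1 \oplus \Im(\Oct)$, and the scalar and imaginary parts are orthogonal. Hence
\begin{equation*}
|X|^2 = \Re(\varrho)^2 + \Im(\varrho)^2 |U|^2 = \Re(\varrho)^2 + \Im(\varrho)^2 = |\varrho|^2 .
\end{equation*}
This gives $|X| = |\varrho|$. Equivalently, in the spirit of the proof of \cref{thm:octonion}, $X$ lies in the subalgebra $\R[U] \cong \C$, where it corresponds to $\varrho$ itself. Alternatively, one can take norms in $\exp_{\Oct}(X) = X$ to get $|X| = e^{s}$, which is $e^{\Re\varrho} = |\varrho|$ by \eqref{eq:logmod}.

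For invariance, I would recall the standard fact (cf.~\cite{baez2002}) that every automorphism $g \in \Gtwo$ fixes $1$ and preserves conjugation. It therefore maps $\Im(\Oct)$ to itself and is an isometry, since $|x|^2 = x\bar x$ gives $|g(x)|^2 = g(x)\overline{g(x)} = g(x\bar x) = |x|^2$. Consequently
\begin{equation*}
g(X) = \Re(\varrho) + \Im(\varrho)\, g(U), \qquad g(U) \in S^6 ,
\end{equation*}
so $g$ maps the sphere to itself and the norm $|\varrho|$ is unchanged.

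As a consistency check, $g$ commutes with the power series defining $\exp_{\Oct}$, so it carries fixed points to fixed points. There is no real obstacle in this argument. The only point that needs care is justifying that automorphisms preserve the norm, which is the conjugation argument above.
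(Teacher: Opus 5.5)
Your proof is correct and is the argument the paper leaves implicit; the corollary is stated without proof. The points of \cref{thm:octonion} are $\Re(\varrho) + \Im(\varrho)\,U$ with $U \perp 1$, so their norm is $\sqrt{\Re(\varrho)^2 + \Im(\varrho)^2} = |\varrho|$, and $\Gtwo$ fixes $1$ and acts isometrically on $\Im(\Oct)$, where your conjugation argument supplies the isometry step the paper takes for granted.
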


\begin{corollary}
For fixed points $\varrho_U = \Re(\varrho)\cdot 1 + \Im(\varrho)\,U$ with $U \in S^6$, the associator obeys
\begin{equation*}
[\varrho_U, \varrho_V, \varrho_W] = \Im(\varrho)^3\,[U, V, W].
\end{equation*}
In particular $|[\varrho_U, \varrho_V, \varrho_W]| = \Im(\varrho)^3\,|[U, V, W]|$, which equals $2\,\Im(\varrho)^3$ when $U, V, W$ are pairwise orthogonal and generic.
\end{corollary}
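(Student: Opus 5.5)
The plan is to expand the associator trilinearly, discard every term that contains a real entry, and then evaluate the associator of three orthogonal imaginary units directly.

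\textbf{Trilinear reduction.} Write $\varrho_U = a\cdot 1 + bU$ with $a = \Re(\varrho)$ and $b = \Im(\varrho)$. The associator $[x,y,z] = (xy)z - x(yz)$ is $\R$-trilinear, because octonion multiplication is $\R$-bilinear. Expanding each slot gives eight terms. Each term other than $b^3[U,V,W]$ has at least one slot filled by the real scalar $a\cdot 1$. Since $1$ lies in the nucleus, $[1,y,z] = [x,1,z] = [x,y,1] = 0$, so all of these terms vanish. This yields the identity $[\varrho_U,\varrho_V,\varrho_W] = b^3[U,V,W]$. The norm statement then follows at once, using $b > 0$ from \cref{thm:existence}.

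\textbf{The orthonormal case.} It remains to show $|[U,V,W]| = 2$ for pairwise orthogonal unit imaginary $U,V,W$ in the generic situation. Here I read ``generic'' as meaning that $W$ is also orthogonal to $UV$, so that the three units do not lie in a common quaternion subalgebra.

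For orthogonal imaginary units we have $UV = -VU$, and $UV$ is again a unit imaginary orthogonal to both $U$ and $V$. By transitivity of $\Gtwo$ on such special triples, or by choosing a Cayley–Dickson basis directly, we may take $U = e_1$, $V = e_2$, $UV = e_3$, and $W = e_4$ orthogonal to $\mathbb{H} = \langle 1,e_1,e_2,e_3\rangle$.

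Because the algebra is alternative, the associator is alternating. In this basis a single product-table computation gives $[e_1,e_2,e_4] = 2(e_1e_2)e_4$, which has norm $2$. The key input is that for $W \perp \mathbb{H}$ one has $x(yW) = -(xy)W$ when $x,y$ are imaginary quaternions. This follows from the Cayley–Dickson formula $(p + q\ell)$-multiplication, writing $W = q\ell$ with $q$ a unit quaternion. Then $(e_1e_2)W$ and $e_1(e_2W)$ are negatives of each other, and their difference has norm $2$.

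\textbf{Main obstacle.} The main obstacle is making ``generic'' precise. If $W$ lies in the quaternion subalgebra generated by $U$ and $V$, the associator vanishes. For general orthonormal $U,V,W$, the norm $|[U,V,W]|$ equals $2$ times the norm of the component of $W$ orthogonal to $\langle 1,U,V,UV\rangle$.

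So I will state the condition as $W \perp UV$, or note that this gives the maximum value $2$. The cleanest route is the Cayley–Dickson identity with $W = q\ell$, which avoids consulting the full Fano-plane table.
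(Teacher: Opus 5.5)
Your trilinear reduction is the paper's proof: expand by trilinearity, drop every term with a factor $1$ because $1$ lies in the nucleus, and read off $\Im(\varrho)^3[U,V,W]$.

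The difference is in the ``in particular'' clause, which the paper only asserts (``for pairwise orthogonal generic pure-imaginary units the associator attains its maximal magnitude''). Reading ``generic'' as $W\perp UV$ is a needed repair, not a convenience. By the paper's own \cref{lem:escaping} (or \cref{thm:octpythag} with $\operatorname{Gram}=1$), an orthonormal triple satisfies $|[U,V,W]| = 2\sqrt{1-\langle W,U\times V\rangle^2}$. So a generic orthonormal triple gives strictly less than $2$: in the paper's basis, $U=e_1$, $V=e_2$, $W=(e_3+e_4)/\sqrt2$ gives $|[U,V,W]|=\sqrt2$ by \cref{lem:gate}. The value $2$ is attained exactly when $W\perp U\times V$.

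Your Cayley--Dickson route also gains something over the paper's table check in \cref{lem:gate}. It is basis-free and needs no $\Gtwo$-transitivity. For a quaternion subalgebra $\mathbb{H}$ and $W\perp\mathbb{H}$ it gives $[x,y,W]=2(x\times y)W$ for all imaginary $x,y\in\mathbb{H}$. Since $\mathbb{H}\,\mathbb{H}^\perp\subseteq\mathbb{H}^\perp$, left alternativity then yields the full order-gate identity $A_{x,y}^2=-4|x\times y|^2$ on $\mathbb{H}^\perp$.

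One statement needs correcting. The identity $x(yW)=-(xy)W$ does not hold for all imaginary quaternions $x,y$. For $x=y$ a unit, alternativity gives $x(xW)=x^2W=-W$, whereas your formula gives $+W$. What the Cayley--Dickson formula actually yields is $x(yW)=(yx)W$ for all $x,y\in\mathbb{H}$. This equals $-(xy)W$ only when $x,y$ are orthogonal imaginary quaternions. Since you apply it to $e_1,e_2$, your conclusion stands. Note also that your labelling $e_1e_2=e_3$ differs from the paper's, where $e_1\times e_2=e_4$ and hence $[e_1,e_2,e_4]=0$.
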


\begin{proof}
The associator $[X, Y, Z] = (XY)Z - X(YZ)$ is trilinear, and the unit $1$ associates with every pair, so $[1, Y, Z] = [X, 1, Z] = [X, Y, 1] = 0$. Expanding each $\varrho_U = \Re(\varrho)\cdot 1 + \Im(\varrho)\,U$ by trilinearity, every term carrying a factor of $1$ drops out, leaving $[\Im(\varrho)\,U, \Im(\varrho)\,V, \Im(\varrho)\,W] = \Im(\varrho)^3\,[U, V, W]$. For pairwise orthogonal generic pure-imaginary units the associator attains its maximal magnitude $|[U, V, W]| = 2$.
\end{proof}

\section{Spherical Trigonometry on the Fixed-Point Sphere}

The sphere $S^6$ of \cref{thm:octonion} carries geometry of its own. A triple of unit imaginary octonions (three points of $S^6$) holds more invariant structure than a spherical triangle on $S^2$, and the surplus is governed by an exact identity.

\begin{definition}
For unit imaginary octonions $a, b, c$, let $I_1 = \langle b, c\rangle$, $I_2 = \langle a, c\rangle$, $I_3 = \langle a, b\rangle$ be the pairwise inner products (cosines of the geodesic side lengths), let $I_4 = \langle a\times b,\, c\rangle$ be the scalar $3$-form, and let $I_5 = |[a,b,c]|$ be the associator magnitude. Here $a\times b = \Im(ab)$ is the seven-dimensional cross product, and $\mathcal{Q}_{a,b} := \operatorname{span}\{a, b, a\times b\}$ is the imaginary part of the quaternion subalgebra generated by $a$ and $b$.
\end{definition}

\begin{lemma}\label{lem:alt}
On $\Oct$ the associator $[x,y,z]$ is alternating: it changes sign under any transposition of its arguments, and so vanishes whenever two of them coincide or all three lie in a common quaternion subalgebra.
\end{lemma}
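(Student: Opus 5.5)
The plan is to derive the alternating property from Artin's theorem, which says that the subalgebra of $\Oct$ generated by any two elements is associative. From this, the vanishing statements follow by multilinearity. Throughout, I use that the associator $[x,y,z] = (xy)z - x(yz)$ is trilinear, which is immediate from bilinearity of octonion multiplication.

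\emph{Step 1 (two equal arguments vanish).} Fix $x, y \in \Oct$. The elements $x, y$ generate an associative subalgebra by Artin's theorem, so $[x,x,y] = [x,y,y] = [x,y,x] = 0$. One could instead use the weaker alternative laws $(xx)y = x(xy)$ and $(yx)x = y(xx)$ together with flexibility $(xy)x = x(yx)$; these can be checked directly on the basis elements $e_i$ via the Fano-plane multiplication rule. I will cite them as standard facts about alternative algebras~\cite{baez2002} rather than reprove them.

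\emph{Step 2 (sign change under transpositions).} Polarize. Expanding $0 = [x+y,\,x+y,\,z]$ by trilinearity and discarding the terms $[x,x,z]$ and $[y,y,z]$, which vanish by Step 1, gives $[x,y,z] = -[y,x,z]$. The same argument applied to the second and third slots, using $[x,y+z,y+z]=0$, gives antisymmetry there. Since the transpositions $(12)$ and $(23)$ generate $S_3$, the associator is alternating: $[x_{\sigma 1}, x_{\sigma 2}, x_{\sigma 3}] = \operatorname{sgn}(\sigma)\,[x_1,x_2,x_3]$.

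\emph{Step 3 (vanishing cases).} If two arguments coincide, the associator is zero; this is Step 1, or equivalently alternation in characteristic $0$. If $x,y,z$ all lie in a quaternion subalgebra $\mathbb{H} \subset \Oct$, then every product in the associator is computed inside $\mathbb{H}$, which is associative, so $[x,y,z]=0$. For the case used later, with $c \in \mathcal{Q}_{a,b}$, note that $\R\oplus\mathcal{Q}_{a,b}$ is the subalgebra generated by $a$ and $b$. It is associative by Artin's theorem, so it suffices to check that it is closed under multiplication. That closure is the standard fact that $1, a, b, a\times b$ span a copy of $\mathbb{H}$ when $a \perp b$; the general case reduces to this by replacing $b$ with its component orthogonal to $a$.

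The main obstacle is Step 1 if Artin's theorem is not taken as citable. In that case I would verify the left alternative law on basis elements, extend it by linearity through polarization, and obtain the right law by conjugation $\overline{xy} = \bar y\,\bar x$. Since the lemma is classical, I expect citing~\cite{baez2002} to be acceptable.
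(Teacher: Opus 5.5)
Your proof is correct and follows the same route as the paper: alternativity gives vanishing on adjacent equal arguments, polarization over the transpositions $(12)$ and $(23)$ upgrades this to full alternation, and associativity of a quaternion subalgebra gives the last vanishing case; the only difference is that you derive alternativity from Artin's theorem, whereas the paper cites it directly. Your extra Step~3 remark about $\R\oplus\mathcal{Q}_{a,b}$ goes beyond what the lemma needs but is sound, including the degenerate case $a\times b=0$, where the subalgebra is $\R\oplus\R a\cong\C$ and is still associative.
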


\begin{proof}
The octonions are alternative~\cite{baez2002,schafer1966}, so $[x,x,y] = [x,y,y] = 0$; a trilinear form vanishing on equal arguments is alternating. By Artin's theorem any two elements generate an associative subalgebra, and a quaternion subalgebra is associative, so the associator vanishes when its three arguments lie in one.
\end{proof}

\begin{lemma}[The order-gate identity]\label{lem:gate}
For imaginary octonions $a,b$, the linear map $A_{a,b}\colon \Im\Oct \to \Im\Oct$ given by $A_{a,b}(x) = [a,b,x]$ vanishes on $\mathcal{Q}_{a,b}$ and satisfies
\begin{equation*}
A_{a,b}^2 = -4\,|a\times b|^2\, P_{a,b},
\end{equation*}
where $P_{a,b}$ is the orthogonal projection onto the four-dimensional complement $\mathcal{Q}_{a,b}^\perp$. In particular $|[a,b,x]| = 2\,|a\times b|\,|x|$ for every $x \perp \mathcal{Q}_{a,b}$.
\end{lemma}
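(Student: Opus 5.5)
We need to prove that $A_{a,b}$ vanishes on $\mathcal{Q}_{a,b}$ and that $A_{a,b}^2 = -4|a\times b|^2 P_{a,b}$. The plan is to reduce to a normal form via the $\Gtwo$ symmetry and then compute in a standard basis.

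First, the degenerate case. If $a\times b=0$, then $a,b$ are linearly dependent (for imaginary octonions $|a\times b|^2=|a|^2|b|^2-\langle a,b\rangle^2$), and $[a,b,x]=0$ by \cref{lem:alt}. Both sides of the identity then vanish.

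Second, vanishing on $\mathcal{Q}_{a,b}$. For $x$ in $\mathcal{Q}_{a,b}$, the three arguments $a,b,x$ lie in the quaternion subalgebra $\R\oplus\mathcal{Q}_{a,b}$, which is associative by Artin's theorem. So \cref{lem:alt} gives $A_{a,b}(x)=0$ there.

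Third, normalization. The associator is bilinear in $(a,b)$ and alternating. Hence replacing $b$ by its component orthogonal to $a$ (that is, $b-\langle a,b\rangle a/|a|^2$) leaves both $A_{a,b}$ and $a\times b$ unchanged, and it also leaves $\mathcal{Q}_{a,b}$ unchanged. So we may assume $a\perp b$. Rescaling $a$ and $b$ scales both sides of the identity by $|a|^2|b|^2$, so we may further take $a,b$ to be orthonormal, in which case $|a\times b|=1$.

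$\Gtwo$ acts transitively on orthonormal pairs of imaginary units. It preserves the product, hence the associator, the cross product, and the inner product. We may therefore take $a=e_1$, $b=e_2$, so that $a\times b=e_3$ (in a Fano-plane convention) and $\mathcal{Q}_{a,b}=\operatorname{span}\{e_1,e_2,e_3\}$.

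Fourth, and this is the main computational step, we compute $[e_1,e_2,x]$ for $x\in\{e_4,\dots,e_7\}$. I would avoid grinding the multiplication table by using alternativity identities. For $x\perp\mathcal{Q}$, left multiplication by unit imaginaries squares to $-1$, and $e_1,e_2$ anticommute with $x$ in the appropriate sense. The Moufang/alternative identities then give
\begin{equation*}
(e_1e_2)x = -e_1(e_2x),
\end{equation*}
which is the standard fact that $\mathcal{Q}^\perp$ is a left $\mathbb{H}$-module with a twisted action. Hence
\begin{equation*}
[e_1,e_2,x] = -2\,e_1(e_2x) = 2\,e_3x \quad\text{up to sign convention},
\end{equation*}
and $x\mapsto e_3x$ is an isometry of $\mathcal{Q}^\perp$ onto itself. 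Applying $A$ twice gives $4\,e_3(e_3x)=4(e_3e_3)x=-4x$ by alternativity. This yields $A^2=-4P$ on $\mathcal{Q}^\perp$, while $A^2=0$ on $\mathcal{Q}$ by the second step.

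The delicate point is the relation $(e_1e_2)x=-e_1(e_2x)$ for $x\perp\mathcal{Q}$. If the identity route proves awkward, I would instead verify it directly for the four basis vectors $e_4,\dots,e_7$ from the Fano table and extend by linearity. The final clause, $|[a,b,x]|=2|a\times b||x|$ for $x\perp\mathcal{Q}_{a,b}$, then follows from the isometry property, rescaled back through the normalization.
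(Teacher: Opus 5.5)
Your proof is correct. Its first three steps match the paper's: vanishing on $\mathcal{Q}_{a,b}$ via \cref{lem:alt}, replacing $b$ by its component orthogonal to $a$ and rescaling, and $\Gtwo$-transitivity to reach $(e_1,e_2)$. You also treat $a\times b=0$ explicitly, a case the paper's division by $|b'|$ quietly skips.

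The routes differ only at the last step. The paper reads $A$ off the multiplication table on the four complementary units and sees it act as $2J$ on two orthogonal planes. You instead identify $A$ on $\mathcal{Q}^\perp$ with $2L_{a\times b}$, left multiplication by the cross product, and get $A^2=-4$ from left alternativity. The two agree: in the paper's convention $e_1e_2=e_4$, and its $J$ is exactly $x\mapsto e_4x$ (for example $2e_4e_3=-2e_6$ and $2e_4e_5=2e_7$).

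Your key relation need not be left to the table or hedged ``up to sign''. Take orthonormal imaginary $u,v$ and $x\perp\mathcal{Q}_{u,v}$.
\begin{itemize}
\item Cyclicity of the alternating associator gives $[u,v,x]=[v,x,u]=(vx)u-v(xu)$.
\item The product $vx$ is imaginary and orthogonal to $u$, since $\langle vx,u\rangle=\langle x,\bar v u\rangle=\langle x,uv\rangle=0$. Hence $(vx)u=-u(vx)$.
\item We have $xu=-ux$, and the polarized left alternative law gives $u(vx)+v(ux)=(uv+vu)x=0$. Together these give $v(xu)=u(vx)$.
\end{itemize}
So $[u,v,x]=-2u(vx)$. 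Comparing with $[u,v,x]=(uv)x-u(vx)$ forces $(uv)x=-u(vx)$, and therefore $[u,v,x]=2(uv)x$ exactly, with no sign ambiguity.

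This argument works for every orthonormal pair, not just $(e_1,e_2)$. So your route makes the $\Gtwo$ step unnecessary and gives the coordinate-free formula $A_{a,b}|_{\mathcal{Q}^\perp}=2L_{a\times b}$, which shows where the complex structure comes from. The paper's route is a finite check: quicker to confirm, but tied to $\Gtwo$-transitivity and a particular multiplication table.
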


\begin{proof}
By \cref{lem:alt}, $A_{a,b}$ vanishes on $\mathcal{Q}_{a,b}$. For its action on the complement we may take $a,b$ orthonormal: writing $b = \langle b,\hat a\rangle\,\hat a + b'$ with $\hat a = a/|a|$ and $b'\perp a$, \cref{lem:alt} gives $A_{a,b} = A_{a,b'}$, and bilinearity gives $A_{a,b} = |a|\,|b'|\,A_{\hat a,\hat b'} = |a\times b|\,A_{\hat a,\hat b'}$ with $\hat a,\hat b'$ orthonormal and $\mathcal{Q}_{a,b} = \mathcal{Q}_{\hat a,\hat b'}$. It therefore suffices to prove $A_{u,v}^2 = -4P_{u,v}$ for an orthonormal pair $u,v$ of imaginary units. The automorphism group $G_2 = \operatorname{Aut}(\Oct)$ acts transitively on such ordered pairs~\cite{baez2002} and preserves the product, the inner product, the cross product, and hence $\mathcal{Q}$ and $P$; so we may take $u = e_1$, $v = e_2$ in the standard basis. There $e_1\times e_2 = e_4$, whence $\mathcal{Q}_{e_1,e_2} = \operatorname{span}\{e_1,e_2,e_4\}$, and on the four complementary units the multiplication table gives
\begin{equation*}
A(e_3) = -2e_6,\quad A(e_6) = 2e_3,\qquad A(e_5) = 2e_7,\quad A(e_7) = -2e_5.
\end{equation*}
So $A$ preserves the orthogonal planes $\operatorname{span}\{e_3,e_6\}$ and $\operatorname{span}\{e_5,e_7\}$, acting on each as $2J$ for an orthogonal $J$ with $J^2 = -I$. Hence $A^2 = -4I$ on $\mathcal{Q}_{e_1,e_2}^\perp$ and $A^2 = 0$ on $\mathcal{Q}_{e_1,e_2}$, that is $A^2 = -4P$; and since each $J$ preserves length, $|[a,b,x]| = 2|a\times b|\,|x|$ for every $x \perp \mathcal{Q}_{a,b}$.
\end{proof}

\begin{lemma}[Associator as escaping volume]\label{lem:escaping}
$I_5 = 2\,|a\times b|\,\operatorname{dist}\!\big(c, \mathcal{Q}_{a,b}\big)$. The associator magnitude is twice the volume by which $c$ escapes the associative world of $a$ and $b$, and it vanishes exactly when $a,b,c$ share a quaternion subalgebra.
\end{lemma}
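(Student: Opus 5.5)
The plan is to decompose $c$ orthogonally against $\mathcal{Q}_{a,b}$ and then apply the order-gate identity (\cref{lem:gate}) to the component that escapes. Write
\begin{equation*}
c = c_\parallel + c_\perp, \qquad c_\parallel \in \mathcal{Q}_{a,b},\quad c_\perp \in \mathcal{Q}_{a,b}^\perp,
\end{equation*}
so that $|c_\perp| = \operatorname{dist}(c, \mathcal{Q}_{a,b})$. The associator is linear in its third slot, so $[a,b,c] = A_{a,b}(c_\parallel) + A_{a,b}(c_\perp)$. By \cref{lem:gate}, or directly by \cref{lem:alt} since $a$, $b$ and $c_\parallel$ all lie in the quaternion subalgebra generated by $a$ and $b$, the first term vanishes. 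This leaves $[a,b,c] = A_{a,b}(c_\perp)$.

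The key step is to evaluate the length of this remaining term. The final clause of \cref{lem:gate} gives $|[a,b,x]| = 2|a\times b|\,|x|$ for every $x \perp \mathcal{Q}_{a,b}$. Taking $x = c_\perp$ yields $I_5 = 2|a\times b|\,|c_\perp|$, which is the claimed formula. One point needs a check: \cref{lem:gate} was set up for general imaginary $a,b$, so the reduction to an orthonormal pair already covers non-orthogonal unit $a,b$. The degenerate case $a\times b = 0$, where $b = \pm a$, also needs a word. There the associator vanishes by \cref{lem:alt}, so both sides are zero, even though $\mathcal{Q}_{a,b}$ collapses to a line. For the interpretation as a volume, I would note that $|a\times b| = \sin\angle(a,b)$ is the area of the parallelogram on $a$ and $b$. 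Multiplying this area by the height of $c$ off $\mathcal{Q}_{a,b}$ gives the ``escaping volume''. This remark is descriptive rather than a separate claim needing proof.

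For the vanishing criterion, the formula shows $I_5 = 0$ if and only if $a\times b = 0$ or $c \in \mathcal{Q}_{a,b}$. In the second case $\{1\}\cup\mathcal{Q}_{a,b}$ spans a quaternion subalgebra containing all three elements. In the first case $a$ and $b$ are collinear, and any two imaginary elements lie in some quaternion subalgebra. For the converse I would argue that if $a,b,c$ lie in a quaternion subalgebra $\mathbb{H}$ with $a\times b \neq 0$, then $\mathcal{Q}_{a,b} \subseteq \Im\mathbb{H}$. Both spaces are three-dimensional, so they are equal, and hence $c \in \mathcal{Q}_{a,b}$.

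I expect the main obstacle to be this last converse step. One must justify that the imaginary part of the quaternion subalgebra generated by $a$ and $b$ is exactly $\operatorname{span}\{a,b,a\times b\}$. This uses closure: $ab = -\langle a,b\rangle + a\times b$, so the span together with $1$ is closed under multiplication. The formula itself is immediate from \cref{lem:gate}.
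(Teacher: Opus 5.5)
Your proposal is correct and follows the paper's proof essentially step for step: decompose $c$ orthogonally against $\mathcal{Q}_{a,b}$, use \cref{lem:gate} to discard $c_\parallel$, and read off $|A_{a,b}(c_\perp)| = 2|a\times b|\,|c_\perp|$. Your separate treatment of the degenerate case $a\times b = 0$, where $I_5 = 0$ for every $c$, sharpens the paper's closing remark that $I_5$ vanishes exactly when $c \in \mathcal{Q}_{a,b}$, since that remark tacitly assumes $a\times b \neq 0$.
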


\begin{proof}
Split $c = c_\parallel + c_\perp$ with $c_\parallel \in \mathcal{Q}_{a,b}$ and $c_\perp \perp \mathcal{Q}_{a,b}$. By \cref{lem:gate}, $[a,b,c] = A_{a,b}(c) = A_{a,b}(c_\perp)$, so $I_5 = |A_{a,b}(c_\perp)| = 2|a\times b|\,|c_\perp| = 2|a\times b|\,\operatorname{dist}(c,\mathcal{Q}_{a,b})$, which vanishes exactly when $c \in \mathcal{Q}_{a,b}$.
\end{proof}

\begin{theorem}[Oct-Pythagorean identity]
\label{thm:octpythag}
For unit imaginary octonions $a, b, c$,
\begin{equation*}
I_4^2 + \tfrac{1}{4} I_5^2 = \operatorname{Gram}(a,b,c) = 1 - I_1^2 - I_2^2 - I_3^2 + 2 I_1 I_2 I_3.
\end{equation*}
\end{theorem}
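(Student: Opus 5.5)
The plan is to decompose $c$ relative to the three-dimensional space $\mathcal{Q}_{a,b}$ and read both sides of the identity off that decomposition. Write $c = c_\parallel + c_\perp$ with $c_\parallel \in \mathcal{Q}_{a,b}$ and $c_\perp \perp \mathcal{Q}_{a,b}$. By \cref{lem:escaping}, $\tfrac14 I_5^2 = |a\times b|^2\,|c_\perp|^2$. For the other term, note that $a\times b \in \mathcal{Q}_{a,b}$, so $I_4 = \langle a\times b, c\rangle = \langle a\times b, c_\parallel\rangle$. The identity therefore splits the Gram determinant into a ``parallel'' contribution $I_4^2$ and a ``perpendicular'' contribution $|a\times b|^2|c_\perp|^2$.

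Next I interpret $\operatorname{Gram}(a,b,c)$ as the squared $3$-volume of the parallelepiped spanned by $a,b,c$ in $\R^7$. It factors as $\operatorname{Gram}(a,b)$ times the squared distance from $c$ to $\operatorname{span}\{a,b\}$. Two standard facts about the seven-dimensional cross product enter here:
\begin{itemize}
\item $\operatorname{Gram}(a,b) = 1 - I_3^2 = |a\times b|^2$, the Lagrange identity $|a\times b|^2 = |a|^2|b|^2 - \langle a,b\rangle^2$;
\item $a\times b \perp a, b$.
\end{itemize}
Both can be checked after a $\Gtwo$ reduction to $a=e_1$, $b=\cos\theta\, e_1+\sin\theta\, e_2$, as in the proof of \cref{lem:gate}. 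Consequently $\mathcal{Q}_{a,b}$ is the orthogonal direct sum of $\operatorname{span}\{a,b\}$ and the line through $n = a\times b$ (assuming $a\times b\neq 0$). The squared distance from $c$ to $\operatorname{span}\{a,b\}$ then splits orthogonally as
\[
\frac{\langle n, c\rangle^2}{|n|^2} + |c_\perp|^2 .
\]

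Multiplying by $\operatorname{Gram}(a,b) = |n|^2$ gives
\[
\operatorname{Gram}(a,b,c) = \langle n,c\rangle^2 + |n|^2|c_\perp|^2 = I_4^2 + \tfrac14 I_5^2 .
\]
The explicit formula $1 - I_1^2 - I_2^2 - I_3^2 + 2I_1I_2I_3$ is just the expansion of the $3\times3$ Gram determinant with unit diagonal.

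The degenerate case $a\times b = 0$, i.e.\ $b=\pm a$, needs separate treatment. There $I_4 = 0$, and $I_5 = 0$ by \cref{lem:alt}. The Gram matrix has two equal or opposite rows, so its determinant is $0$ and the identity holds trivially.

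I expect the main obstacle to be the Lagrange identity together with the orthogonality $a\times b\perp a,b$ in seven dimensions. The cleanest route is to derive both from octonion algebra. For imaginary $a,b$ we have $ab = -\langle a,b\rangle + a\times b$. Multiplicativity of the norm gives $|ab|^2 = |a|^2|b|^2$, hence $|a\times b|^2 = |a|^2|b|^2 - \langle a,b\rangle^2$. Orthogonality follows from $\langle ab, a\rangle = |a|^2\langle b,1\rangle = 0$, which is the standard composition-algebra identity $\langle xy, xz\rangle = |x|^2\langle y,z\rangle$. Alternatively, both facts follow from the $\Gtwo$ reduction by direct inspection of the multiplication table.
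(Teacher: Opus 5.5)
Your proof is correct and is essentially the paper's argument: write $\operatorname{Gram}(a,b,c) = |a\times b|^2\,\operatorname{dist}(c,\operatorname{span}\{a,b\})^2$, split that distance orthogonally into the component along $a\times b$ and the component escaping $\mathcal{Q}_{a,b}$, and finish with \cref{lem:escaping}. The details the paper leaves implicit are welcome additions: your derivation of the Lagrange identity and of $a\times b\perp a,b$ from the composition-algebra identities, and your separate treatment of $b=\pm a$, where the paper tacitly divides by $|a\times b|$.
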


\begin{proof}
The Gram determinant equals the squared volume of the parallelepiped spanned by $a, b, c$, so $\operatorname{Gram} = |a\times b|^2\,\operatorname{dist}(c, \operatorname{span}\{a,b\})^2$. The component of $c$ orthogonal to $\operatorname{span}\{a,b\}$ splits, by Pythagoras, into its part along the unit vector $a\times b/|a\times b|$, of length $I_4/|a\times b|$, and its part escaping the three-space $\mathcal{Q}_{a,b}$, of length $\operatorname{dist}(c, \mathcal{Q}_{a,b})$:
\begin{equation*}
\operatorname{dist}(c, \operatorname{span}\{a,b\})^2 = \Big(\frac{I_4}{|a\times b|}\Big)^2 + \operatorname{dist}(c, \mathcal{Q}_{a,b})^2.
\end{equation*}
Multiplying through by $|a\times b|^2$ and applying \cref{lem:escaping},
\begin{equation*}
\operatorname{Gram} = I_4^2 + |a\times b|^2\,\operatorname{dist}(c, \mathcal{Q}_{a,b})^2 = I_4^2 + \tfrac14 I_5^2.
\end{equation*}
The closed form of the Gram determinant for unit vectors is standard.
\end{proof}

\begin{corollary}[The twist angle]\label{cor:twist}
Setting $I_4 = \sqrt{\operatorname{Gram}}\,\cos\psi$ and $I_5 = 2\sqrt{\operatorname{Gram}}\,\sin\psi$ defines a single angle $\psi \in [0,\pi]$. By \cref{thm:octpythag} the five invariants satisfy exactly this one relation, and the Jacobian of $(a,b,c) \mapsto (I_1,\dots,I_5)$ has rank $4$ at a generic triple, so exactly four of them are functionally independent: the three side lengths and the twist $\psi$. These four are complete: up to a $G_2$ automorphism, a triple is determined by $(I_1,I_2,I_3,\psi)$, the third point being recovered from its projections onto $a$, $b$, $a\times b$ and an escaping direction of length $\tfrac12 I_5$. A spherical triangle on $S^6$ thus carries exactly one invariant beyond its $S^2$ counterpart, the twist $\psi$, with no analogue in ordinary spherical trigonometry; its extremes $\psi = 0$ and $\psi = \pi/2$ are the associative and maximally non-associative configurations.
\end{corollary}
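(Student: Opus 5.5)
The plan is to reduce everything to a normal form under $\Gtwo$ and read the invariants off that form. First, the definition of $\psi$. By \cref{thm:octpythag}, $(I_4/\sqrt{\operatorname{Gram}},\, I_5/(2\sqrt{\operatorname{Gram}}))$ is a unit vector in $\R^2$ whenever $\operatorname{Gram}>0$. Its second coordinate is $\geq 0$ because $I_5$ is a norm. So it equals $(\cos\psi,\sin\psi)$ for a unique $\psi\in[0,\pi]$. The degenerate case $\operatorname{Gram}=0$ is excluded by genericity.

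The extremes then follow at once. $\psi=0$ holds iff $I_5=0$, which by \cref{lem:escaping} means $a,b,c$ share a quaternion subalgebra. $\psi=\pi/2$ holds iff $I_4=0$, which means all of the out-of-plane component of $c$ escapes $\mathcal{Q}_{a,b}$, so $I_5$ attains its largest value $2\sqrt{\operatorname{Gram}}$ for the given sides.

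\emph{Normal form.} Assume $a\times b\neq 0$, which is generic. Write $b=I_3 a+b'$ with $b'\perp a$ and $|b'|=|a\times b|$. $\Gtwo$ is transitive on orthonormal pairs (as used in \cref{lem:gate}), so we may take $a=e_1$ and $\hat b'=e_2$, hence $a\times \hat b'=e_4$. Then decompose
\[
c=\alpha e_1+\beta e_2+\gamma e_4+\delta u,\qquad u\perp\operatorname{span}\{e_1,e_2,e_4\},\ |u|=1,\ \delta\geq 0 .
\]
The coefficients are fixed by the invariants. $\alpha=I_2$, and $\beta$ is solved from $I_1=\langle b,c\rangle$ using $|b'|\neq 0$. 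Next, $\gamma=I_4/|a\times b|$, which carries the sign recorded by $\cos\psi$. Finally, $\delta=\operatorname{dist}(c,\mathcal{Q}_{a,b})=I_5/(2|a\times b|)$ by \cref{lem:escaping}.

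The remaining freedom is the direction $u\in S^3\subset\mathcal{Q}^\perp$. The stabilizer of $(e_1,e_2)$ in $\Gtwo$ is $SU(2)$. I would cite that it acts on the four-dimensional complement $\operatorname{span}\{e_3,e_5,e_6,e_7\}$ as $\C^2$, hence transitively on its unit sphere, so we may take $u=e_3$. Thus $(I_1,I_2,I_3,\psi)$ determine the triple up to $\Gtwo$. I expect this step to be the main obstacle: it needs the precise structure of the pair stabilizer (that it is $SU(2)$ acting irreducibly on the complement), which I would import from \cite{baez2002}, or check directly with the multiplication table used in \cref{lem:gate}.

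\emph{Rank of the Jacobian.} The upper bound is a dimension count. Triples in $(S^6)^3$ form an $18$-dimensional manifold. A generic triple has trivial stabilizer, since fixing $e_1,e_2,e_3$ fixes a generating set of $\Oct$. So generic $\Gtwo$-orbits have dimension $14$. The invariants are constant along orbits, which gives rank $\leq 18-14=4$.

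The lower bound comes from the normal form. The slice $(\theta,\beta,\gamma,\delta)\mapsto(e_1,\ \cos\theta\, e_1+\sin\theta\, e_2,\ \alpha e_1+\beta e_2+\gamma e_4+\delta e_3)$, with $\alpha$ fixed by $|c|=1$, is four-dimensional. On it the map to $(I_1,\dots,I_5)$ is injective by the recovery argument above. I would confirm rank $4$ by computing its Jacobian at one explicit generic point, for instance $\theta=\pi/2$ with $\beta,\gamma,\delta$ nonzero. Combined with the single relation of \cref{thm:octpythag}, this shows that exactly four invariants are independent: the three sides and $\psi$.
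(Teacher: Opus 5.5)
Your proof is correct. On the Jacobian rank it takes a different route from the paper, which justifies this corollary in three pieces. It uses \cref{thm:octpythag} for the relation and the definition of $\psi$. It gives a one-line recovery of $c$ from its projections onto $a$, $b$, $a\times b$ and an escaping direction. For the rank, it offers only the closing remark that the rank ``was measured to be $4$'' numerically.

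Your normal form is the same recovery, but you make explicit a step the paper leaves unstated. The stabilizer of $(a,b)$ in $\Gtwo$ must carry any unit escaping direction in $\mathcal{Q}_{a,b}^\perp$ to any other; without this, the projections would not determine the triple up to automorphism. You do not need the $SU(2)$ representation theory for it. The simple transitivity of $\Gtwo$ on basic triples \cite{baez2002} gives it directly, and the same fact supplies the trivial generic stabilizer in your dimension count.

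Your rank argument turns the paper's measurement into a proof and explains the count as $\dim (S^6)^3-\dim\Gtwo$. The upper bound $18-14$ holds because the invariants are constant on the $14$-dimensional generic orbits. The lower bound comes from your slice: at $\theta=\pi/2$, in the columns $(\theta,\beta,\gamma,\delta)$, the rows $I_3, I_1, I_4, I_5$ form a triangular $4\times4$ minor with determinant $-2$.

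One small correction: $I_5=0$ does not force $\psi=0$. With $\operatorname{Gram}>0$ it gives $\psi\in\{0,\pi\}$, and the sign of $I_4$ decides which. So the associative configurations sit at both ends of $[0,\pi]$, with $\psi=\pi/2$ the maximally non-associative middle. The statement itself shares this imprecision.
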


\begin{corollary}[Concentric fixed-point shells]\label{cor:shells}
\cref{thm:octonion} applies on every branch of the logarithm: the octonionic fixed-point set of $\exp_{\Oct}$ is a disjoint union of concentric spheres $\Re(\varrho_n) + \Im(\varrho_n)\,S^6$, one for each solution $\varrho_n$ of $\exp(z) = z$ on branch $n$. Their imaginary radii grow as $\Im(\varrho_n) \sim 2\pi n$, spacing the shells by the exponential period $2\pi$, the same scale that fixes the quantized spectrum of \cref{thm:spectrum}.
\end{corollary}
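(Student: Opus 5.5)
The plan is to rerun the reduction in the proof of \cref{thm:octonion} with the restriction $|v| \in (0,\pi)$ removed, and then read off the solution set from the complex fixed points. Write $X = s\cdot 1 + v$ with $v \in \Im\Oct$ and $r = |v|$.

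First, if $v = 0$ then $X$ is real and $\exp_{\Oct}(X) = e^s \neq s$ by \cref{thm:noreal}, so every fixed point has $r > 0$. For $r > 0$, the subalgebra computation $\exp_{\Oct}(X) = e^s(\cos r + \tfrac{\sin r}{r}v)$ is unchanged. It yields exactly the pair $e^s\cos r = s$, $e^s\sin r = r$, which is \eqref{eq:real}--\eqref{eq:imag} with $(a,b) = (s,r)$. Hence $X$ is a fixed point if and only if $z = s + ir$ is a fixed point of the complex exponential with $\Im z > 0$, while the direction $U = v/r \in S^6$ stays free.

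This gives a bijection between fixed-point spheres and the solutions $\varrho_n = -W_k(-1)$ lying in the upper half-plane. The conjugate solutions give nothing new, since $\bar z$ corresponds to $r<0$, which is absorbed by $U \mapsto -U$. The fixed-point set is therefore $\bigcup_n \big(\Re(\varrho_n) + \Im(\varrho_n)S^6\big)$.

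Disjointness is immediate, because a point $X$ recovers $s = \Re X$ and $r = |\Im X|$, and distinct $\varrho_n$ have distinct pairs $(s,r)$. I would also note that "concentric" should be read loosely. The spheres are all centred on the real axis and are coaxial, but their centres $\Re(\varrho_n)$ differ, drifting like $\log(2\pi n)$. I would state this precisely rather than claim a common centre.

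The step I expect to need genuine work is the labelling and asymptotics: that there is exactly one solution per branch in the upper half-plane, and that $\Im(\varrho_n) \sim 2\pi n$. My first approach would be to invoke the standard Lambert asymptotic from~\cite{corless1996}, namely
\[
W_k(-1) = \log(-1) + 2\pi i k - \log\big(\log(-1) + 2\pi i k\big) + o(1).
\]
This gives $\varrho_n = \log(2\pi n) + i(2\pi n + \tfrac{\pi}{2}) + o(1)$ for the upper-half-plane branches, matching the $k=-1$ case used in \cref{thm:existence}.

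As an independent check I would argue directly from the equations. We have $e^s = |z| \approx r$, so $s \approx \log r$, and $\tan r = r/s \to +\infty$. This forces $r = 2\pi n + \tfrac{\pi}{2} - O(1/r)$ with $\cos r > 0$, consistent with $s > 0$.

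To get uniqueness of the solution near each such $r$, I would apply the argument principle to $e^z - z$ on the rectangles $\{\log(\pi n) < \Re z < \log(4\pi n),\ 2\pi n < \Im z < 2\pi n + \pi\}$. On the boundary, $e^z$ dominates $z$ on the right edge and $z$ dominates $e^z$ on the left edge, so the count should come out to one zero per rectangle. Consecutive imaginary radii then differ by $2\pi + o(1)$, which is the stated spacing by the exponential period, the same period that appears in \cref{thm:spectrum}.
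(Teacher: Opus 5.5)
Your proposal is correct and follows the paper's argument. The paper's argument is only the remark that the reduction in \cref{thm:octonion} goes through with the restriction $|v|\in(0,\pi)$ dropped, so each fixed point $-W_k(-1)$ in the upper half-plane yields one sphere; your handling of $v=0$, of conjugate solutions (absorbed by $U\mapsto -U$), and your caveat that the shells are only coaxial (concentric after projecting to $\Im\Oct$) correctly tighten the statement's loose wording. One small slip: from $\tan r = r/s$ with $s\approx\log r$ the offset is $\arctan(s/r)=O(\log r/r)$ rather than $O(1/r)$, which leaves $\Im(\varrho_n)=2\pi n+\tfrac{\pi}{2}+o(1)$ and the $2\pi+o(1)$ spacing intact.
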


\begin{remark}
The order-gate identity (\cref{lem:gate}) and the oct-Pythagorean identity (\cref{thm:octpythag}) admit exact verification. The action of $A_{e_1,e_2}$ above is an exact integer computation in the multiplication table; the homogeneous form $4I_4^2 + I_5^2 = 4\operatorname{Gram}$ holds with no error across $2\times10^{5}$ random integer-valued imaginary triples, certifying it as a polynomial identity; and the rank of the invariant Jacobian was measured to be $4$, as the twist-angle count requires.
\end{remark}

\section{Relationship to Other Constants}

The constant $\varrho$ fits into a family of self-referential constants:
\begin{itemize}
\item \textbf{0:} $x + x = x$, $|z| = 0$, additive rest state
\item \textbf{1:} $x \cdot x = x$, $|z| = 1$, multiplicative rest state
\item \textbf{$i$:} $x^2 = -1$, $|z| = 1$, rotational unit
\item \textbf{$e$:} $\frac{d}{dx} e^x = e^x$, growth eigenvalue
\item \textbf{$\pi$:} $e^{i\pi} = -1$, half-rotation
\item \textbf{$\Omega$:} $x e^x = 1$, $|z| = 0.5671$, real self-reference, $W(1)$
\item \textbf{$\varrho$:} $e^z = z$, $|z| = 1.3746$, nonlinear rest state
\end{itemize}

Euler's identity $e^{i\pi} + 1 = 0$ ties together five of these constants; $\varrho$ meets several of them in turn: $\exp(\varrho) = \varrho$ (through $e$), $|\varrho^\pi| \approx e$ (through $\pi$), $\varrho = -W_{-1}(-1)$, and $\Re(\varrho)\,\pi \approx 1$.

\section{Open Questions}

The following questions arise from the properties above and are left for future investigation:
\begin{enumerate}
\item \textbf{The gap $\delta$.} Does $\delta = 1 - \Re(\varrho)\,\pi = 5.604 \times 10^{-4}$ admit a closed form, or follow from some relation between $\varrho$ and $\pi$? It has no known expression in elementary constants.
\item \textbf{The arithmetic of $\arg\varrho$.} Is $\arg(\varrho)/\pi = \Im(\varrho)/\pi$ irrational, indeed transcendental? The aperiodicity of $\Lambda_\varrho$ (\cref{thm:quasicrystal}) rests on it, yet we have only numerical evidence, and a proof appears to require transcendence theory.
\item \textbf{The full spectrum.} Prove existence and uniqueness of a solution $z_n$ to $\exp(z) - \log(z) = 2\pi i n$ for every $n \geq 1$. \cref{thm:spectrum} settles the $n = 0$ case and the asymptotics; the rest is so far only computed.
\item \textbf{The transpose.} Does the identity $w^\varrho = \varrho^{\log w}$ (\cref{thm:transpose}) carry a representation-theoretic or integral-transform meaning, a genuine duality between the additive and multiplicative structures it interchanges?
\item \textbf{Up the Cayley--Dickson tower.} The fixed-point argument of \cref{thm:octonion} is dimension-agnostic: since $v^2 = -|v|^2$ for every imaginary $v$, it produces a sphere of self-referential imaginary units in each Cayley--Dickson algebra (an $S^{2^k - 2}$ in dimension $2^k$). The oct-Pythagorean identity (\cref{thm:octpythag}), by contrast, rests on the seven-dimensional cross product and the $G_2$ structure, which the sedenions and beyond lose along with division and associativity. Does an exact Gram-type identity for triples survive there, and what, if anything, replaces the twist angle?
\end{enumerate}

\paragraph{Computations.} The numerical statements were verified by floating-point computation and the octonion identities by exact integer arithmetic; the scripts, which also generate the figures, are available on request.

\end{document}